\newcommand{\keywordsname}{Key words}
\newcommand{\keywords}[1]{%
\def\thekeywords{#1}%
\begin{@bstr@ctlist}
\hspace*{\abstitleskip}{\abstractnamefont\keywordsname\@bslabeldelim}\abstracttextfont\
#1%
\par\end{@bstr@ctlist}
}
\newcommand{\subjclassname}{Mathematics subject classification}
\newcommand{\subjclass}[2][2010]{%
\begin{@bstr@ctlist}
\hspace*{\abstitleskip}{\abstractnamefont\subjclassname\ (#1)\@bslabeldelim}\abstracttextfont\
#2%
\par\end{@bstr@ctlist}
}
\def\and{
	\end{tabular}%
	and%
	\begin{tabular}[t]{c}}%
\def\thanks#1{
\protected@xdef\@thanks{\@thanks
\protect\footnotetext[\the\c@footnote]{#1}}%
}
\let\addresses\@empty      
\newcommand{\address}[2][]{\g@addto@macro\addresses{\address{#1}{#2}}}
\newcommand{\curraddr}[2][]{\g@addto@macro\addresses{\curraddr{#1}{#2}}}
\newcommand{\email}[2][]{\g@addto@macro\addresses{\email{#1}{#2}}}
\newcommand{\urladdr}[2][]{\g@addto@macro\addresses{\urladdr{#1}{#2}}}
\def\enddoc@text{
  \ifx\@empty\addresses \else\@setaddresses\fi}
\def\emailaddrname{E-mail address}
\def\@setaddresses{\par
  \nobreak \begingroup
%
%
  \interlinepenalty\@M
  \def\address##1##2{\begingroup%
    \par\addvspace\bigskipamount
    \@ifnotempty{##1}{(\ignorespaces##1\unskip) }%
    {\noindent\ignorespaces##2}\par\endgroup}%
%
%
  \def\email##1##2{\begingroup
    \@ifnotempty{##2}{\nobreak\noindent\emailaddrname
      \@ifnotempty{##1}{, \ignorespaces##1\unskip}\/:\space
      \ttfamily##2\par}\endgroup}%
%
%
  \addresses
  \endgroup
}
\def\cstar#1{\expandafter\@cstar\csname c@#1\endcsname}
\def\@cstar#1{\ifcase#1\or $\ast$\or $\ast\ast$\or $\ast\ast\ast$\fi}
\AddEnumerateCounter{\cstar}{\@cstar}{$\ast\ast\ast$}
\newlist{conditions}{enumerate}{1}
\newlist{iconditions}{enumerate}{1}
\newlist{nconditions}{enumerate}{1}
\setlist[conditions]{label=\normalfont(\alph*),ref=\normalfont{(\alph*)}}
\setlist[iconditions]{label=\normalfont(\roman*),ref=\normalfont{(\roman*)}}
\setlist[nconditions]{label=\normalfont(\arabic*),ref=\normalfont{(\arabic*)}}
\mathchardef\mhyphen="2D
\newcommand{\R}{\mathbb{R}}
\newcommand{\C}{\mathcal{C}}
\newcommand{\FC}{\mathcal{F}}
\newcommand{\GC}{\mathcal{G}}
\newcommand{\NC}{\mathcal{N}}
\newcommand{\OC}{\mathcal{O}}
\newcommand{\RC}{\mathcal{R}}
\newcommand{\mf}{\mathfrak{m}}
\newcommand{\pf}{\mathfrak{p}}
\newcommand{\Cinfty}{\C^{\infty}}
\newcommand{\Wns}{W^{\mathrm{ns}}}
\newcommand{\Vsing}{V^{\mathrm{sing}}}
\newcommand{\Wsing}{W^{\mathrm{sing}}}
\newcommand{\Max}{\func{Max}}
\newcommand{\Rad}{\func{Rad}}
\newcommand{\Reg}{\func{Reg}}
\newcommand{\Spec}{\func{Spec}}
\newtheorem{theorem}{Theorem}[section]
\newtheorem{corollary}[theorem]{Corollary}
\newtheorem{proposition}[theorem]{Proposition}
\theoremstyle{definition}
\newtheorem{definition}[theorem]{Definition}
\newtheorem{example}[theorem]{Example}
\newtheorem{notation}[theorem]{Notation}
\newtheorem{problem}[theorem]{Problem}
\newtheorem{remark}[theorem]{Remark}
\title{\texorpdfstring{\bf}{}Nash regulous functions}
\date{}
\author{Wojciech Kucharz\thanks{\small The author was partially supported by
the National Science Centre (Poland) under grant number
2014/15/B/ST1/00046.}}
\address{Institute of Mathematics\\Faculty of Mathematics and Computer
Science\\Jagiellonian University\\\L{}ojasiewicza 6\\30-348
Krak\'ow\\Poland}
\email{Wojciech.Kucharz@im.uj.edu.pl}
\begin{document}
\maketitle
\thispagestyle{empty}

\begin{abstract}
A real-valued function on $\R^n$ is \emph{$k$-regulous}, where $k$ is a
nonnegative integer, if it is of class $\C^k$ and can be represented as
a quotient of two polynomial functions on $\R^n$. Several
interesting results involving such functions have been obtained
recently. Some of them (Nullstellensatz, Cartan's theorems A~and~B,
etc.) can be carried over to a new setting of Nash $k$-regulous
functions, introduced in this paper. Here a function on a Nash manifold
$X$ is called \emph{Nash $k$-regulous} if it is of class $\C^k$ and can
be represented as a quotient of two Nash functions on $X$.
\end{abstract}

\keywords{Nash manifold, Nash function, Nash \texorpdfstring{$k$}{k}-regulous function, Nash
constructible set, Nullstellensatz, Cartan's theorems A~and~B.}
\hypersetup{pdfkeywords={\thekeywords}}
\subjclass{14P20, 14P10, 14P99}

\section{Introduction}\label{sec-1}

Throughout this paper by a function on a set $S$ we always mean a
real-valued function. Given a collection $F$ of functions on $S$, we set
\begin{equation*}
Z(F) \coloneqq \{ x \in S \colon f(x) = 0 \quad \textrm{for all}\ f \in F \}
\end{equation*}
and write $Z(f_1, \ldots, f_r)$ for $Z(F)$ if $F=\{ f_1, \ldots, f_r\}$.

A function $f$ on $\R^n$ is said to be \emph{$k$-regulous}, where $k$ is
a nonnegative integer, if it is of class~$\C^k$ and there exist two
polynomial functions $p$, $q$ on $\R^n$ such that $Z(q) \neq \R^n$ and
$f = p/q$ on $\R^n \setminus Z(q)$; \mbox{a~$0$-regulous} function is called
\emph{regulous}. Such functions, which often appear under different
names and in more general contexts, have several remarkable properties
and applications, cf. \citeleft \citen{bib1}\citepunct \citen{bib4}\citepunct
\citen{bib5}\citepunct \citen{bib8}\citedash\citen{bib22}\citepunct
\citen{bib25}\citepunct \citen{bib27}\citeright. In particular, the
authors of \cite{bib4} obtained the following: a variant of the
classical Nullstellensatz for the ring $\RC^k(\R^n)$ of $k$-regulous
functions on $\R^n$, a description of the zero locus~$Z(F)$ of an
arbitrary collection $F \subseteq \RC^k(\R^n)$ in terms of Zariski
(algebraically) constructible sets, and counterparts of Cartan's
theorems~A and~B for quasi-coherent $k$-regulous sheaves.

For the sake of clarity, let us recall that a function $f$ on $\R^n$
which is regulous and of class $\Cinfty$ is actually regular, that is,
$f=p/q$ for some polynomial functions $p$, $q$ on~$\R^n$ with $Z(q) =
\varnothing$, cf. \cite[Proposition~2.1]{bib10}. Therefore one gains no
new insight by considering such functions.

In the present paper we introduce \emph{Nash $k$-regulous functions} and
show that most results of \cite{bib4} can be carried over to this new
setting. Along the way we establish connections with arc-analytic
functions and constructible categories investigated in \cite{bib1a,
bib23, bib24, bib26}, and also point out at a potential application to
the problem of continuous solutions of linear equations \cite{bib3}.

Recall that a \emph{Nash manifold}~$X$ is an analytic
submanifold of~$\R^n$, for some $n$, which is also a semialgebraic set.
A \emph{Nash function} on $X$ is an analytic function with semialgebraic
graph. The ring $\NC(X)$ of Nash functions on $X$ is Noetherian
\cite[Theorem~8.7.18]{bib2}.

\begin{definition}\label{def-1-1}
A function $f$ on $X$ is said to be \emph{Nash $k$-regulous}, where $k$
is a nonnegative integer, if it is of class $\C^k$ and there exist two
Nash functions $\varphi$, $\psi$ on $X$ such that the set $Z(\psi)$ is
nowhere dense in $X$ and $f=\varphi/\psi$ on $X \setminus Z(\psi)$; a
Nash $0$-regulous function is called \emph{Nash regulous}.
\end{definition}

The set $\NC^k(X)$ of all Nash $k$-regulous functions on $X$ forms a
ring. Obviously, $\RC^k(\R^n)$ is a subring of $\NC^k(\R^n)$.

\begin{example}\label{ex-1-2}
The function $f \colon \R^2 \to \R$ defined by
\begin{equation*}
f(x,y) = \frac{x^{3+k}}{x^2+y^2} \ \textrm{for} \ (x,y) \neq (0,0)\
\textrm{and} \ f(0,0)=0
\end{equation*}
is $k$-regulous but it is not $(k+1)$-regulous. In turn, the function $g
\colon \R^2 \to \R$, $g(x,y) \coloneqq f(x,y) \sqrt{1+x^2}$ is Nash
$k$-regulous but it is not $k$-regulous.
\end{example}

A more interesting example is the following.

\begin{example}\label{ex-1-3}
Consider the function $h \colon \R^3 \to \R$ defined by
\begin{equation*}
h(x,y,z) =
\begin{cases}
\frac{x^3}{x^2 + (1+z^2)^{1/3}xy + (1+z^2)^{2/3}y^2} & \textrm{if} \
x^2+y^2 \neq 0\\
0 & \textrm{otherwise.}
\end{cases}
\end{equation*}
Since $x^2+(1+z^2)^{1/3}xy + (1+z^2)^{2/3}y^2 \geq \frac{1}{2}
(x^2+y^2)$, we conclude that $h$ is a Nash regulous function.
\end{example}

It readily follows from \cite[Proposition~2.2.2]{bib2} that any Nash
regulous function is semialgebraic. Furthermore, by
\cite[Proposition~8.1.8]{bib2}, any function on $X$ which is Nash
regulous and of class $\Cinfty$ is actually a Nash function. Evidently,
if $\dim X =1$, then any Nash regulous function on $X$ is a Nash
function.

Our main results on geometric and algebraic properties of Nash
$k$-regulous functions are Theorem~\ref{th-3-5} (characterization of
$Z(F)$ for $F \subseteq \NC^k(X)$) and Theorem~\ref{th-3-6}
(Nullstellensatz for $\NC^k(X)$) in Section~\ref{sec-3}. They depend on
some constructions that involve Nash sets.

Recall that a subset $V \subseteq X$ is called a \emph{Nash subset} if
it can be written in the form $V = Z(F)$ for some collection $F$ of Nash
functions on $X$. Since the ring $\NC(X)$ is Noetherian, one can find
functions $f_1, \ldots, f_r$ in $F$ such that $V=Z(f)$ with $f =f_1^2 +
\cdots + f_r^2$. In particular, each Nash subset of $X$ is
semialgebraic.

\begin{definition}\label{def-1-2}
A subset $E$ of $X$ is said to be \emph{Nash constructible} if it
belongs to the Boolean algebra generated by the Nash subsets of $X$; or
equivalently if $E$ is a finite union of sets of the form $V \setminus
W$, where $V$ and $W$ are Nash subsets of $X$.
\end{definition}

Basic properties of Nash constructible sets are established in
Section~\ref{sec-2} and used in the remainder of the paper.

Unless explicitly stated otherwise, we consider $X$ endowed with the
\emph{Euclidean topology} induced by the standard metric on $\R$.
However, the \emph{Nash topology} and the \emph{Nash constructible
topology}, both defined in Section~\ref{sec-2}, are also useful. In
Section~\ref{sec-4} we introduce a locally ringed space $(X, \NC_X^k)$,
with underlying Nash constructible topology, and study sheaves of
$\NC_X^k$-modules on $X$. The main results are Theorem~\ref{th-4-4}
(Cartan's theorem~A) and Theorem~\ref{th-4-5} (Cartan's theorem~B).
Essentially, the ringed space $(X, \NC_X^k)$ is equivalent to the affine
scheme $\Spec(\NC^k(X))$. We will see that the scheme $\Spec(\NC^k(X))$
is rather unusual for every open subset of it is affine. Let us recall
that Cartan's theorems~A and~B fail for coherent Nash sheaves, cf.
\cite{bib7}.

For background material on real algebraic geometry we refer the reader
to \cite{bib2}. The dimension of a semialgebraic set~$A$ in~$\R^n$,
written $\dim A$, is defined as the maximum dimension of a Nash
submanifold of $\R^n$ contained in $A$. Recall that $\dim A$ is equal to
the dimension of the Zariski closure of $A$ in $\R^n$. If $V$ is a Nash
subset of $X$, then $\dim V$ is, by definition, the dimension of $V$
regarded as a semialgebraic set.

\section{Nash constructible sets}\label{sec-2}

There are elementary concepts of irreducible subsets and decomposition
into irreducible components in an arbitrary topological space $S$. They
are particularly useful if $S$ is a Noetherian space, that is, if every
descending chain of closed subsets of $S$ is stationary. For these
notions \cite[pp.~13--15]{bib6} can be consulted.

Let $X$ be a Nash manifold. The collection of all Nash subsets of $X$ is
the family of closed subsets for some topology on $X$, called the
\emph{Nash topology}. Since the ring $\NC(X)$ of Nash functions on $X$ is
Noetherian, the Nash topology is Noetherian. It readily follows that if
$Z' \subseteq Z$ are Nash subsets of~$X$ with $Z' \neq Z$ and $Z$ Nash
irreducible, then $\dim Z' < \dim Z$ (recall our convention on
dimension).

Let $V \subseteq X$ be an irreducible Nash subset of dimension $d$. A
point $x \in V$ is said to be \emph{regular in dimension d} if there
exists an open (in the Euclidean topology) semialgebraic neighborhood $U
\subseteq X$ of $x$ such that $V \cap U$ is a Nash submanifold of $U$ of
dimension $d$. The set $\Reg_d(V)$ of all regular points of $V$ in
dimension $d$ is a semialgebraic subset for which
\begin{equation*}
\dim(V \setminus \Reg_d(V)) < d.
\end{equation*}
Denoting by $\Vsing$ the closure of $V \setminus \Reg_d(V)$ in the Nash
topology on $X$, we get
\begin{equation*}
\dim(V \setminus \Reg_d(V)) = \dim \Vsing.
\end{equation*}

Now let $W \subseteq X$ be a Nash subset and let $W_1, \ldots, W_r$ be
its Nash irreducible components. We set
\begin{equation*}
\Wsing \coloneqq \bigcup_i \Wsing_i \cup \bigcup_{j \neq k} (W_j \cap
W_k) \quad \textrm{and} \quad \Wns \coloneqq W \setminus \Wsing.
\end{equation*}
Consequently, $\Wsing$ is a Nash subset of $X$ with 
\begin{equation*}
\dim \Wsing < \dim W.
\end{equation*}

\begin{proposition}\label{prop-2-1}
Let $X$ be a Nash manifold, $E \subseteq X$ a closed (in the Euclidean
topology) Nash constructible subset, and $W$ the Nash closure of $E$ in
$X$. Then $\Wns \subseteq E$.
\end{proposition}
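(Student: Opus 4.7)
\textbf{Proof plan for Proposition~\ref{prop-2-1}.}

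The plan is to reduce to a canonical form for the Nash constructible set $E$, localize around a point $x \in \Wns$ using the Euclidean topology, and then close the argument with a dimension-density observation.

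First I would normalize the representation of $E$. Since $E$ is Nash constructible, write $E = \bigcup_{i=1}^{s} (A_i \setminus B_i)$ with $A_i, B_i$ Nash subsets of $X$. Because $\NC(X)$ is Noetherian, each $A_i$ decomposes into finitely many Nash irreducible components, so after refining the decomposition I may assume that every $A_i$ is Nash irreducible, and after discarding terms with $A_i \subseteq B_i$ I may further assume $A_i \not\subseteq B_i$. Since each set $A_i \setminus B_i$ is Nash-dense in $A_i$ (irreducibility of $A_i$ forces $\dim(A_i \cap B_i) < \dim A_i$), the Nash closure of $E$ is exactly $W = A_1 \cup \cdots \cup A_s$. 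After removing redundant $A_i$'s that are contained in some $A_j$, the remaining $A_i$'s are precisely the Nash irreducible components $W_1, \ldots, W_r$ of $W$.

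Next I would localize. Fix $x \in \Wns$, say $x \in W_1$, and set $d \coloneqq \dim W_1$. By the definition of $\Wns$ we have $x \notin W_j$ for $j \neq 1$ and $x \in \Reg_d(W_1)$. Since every Nash subset is closed in the Euclidean topology, I can choose an open semialgebraic Euclidean neighborhood $U \subseteq X$ of $x$ such that $U \cap W_j = \varnothing$ for all $j \neq 1$ and $W_1 \cap U$ is a Nash submanifold of $U$ of dimension $d$. Grouping the indices for which $A_i = W_1$ and using the identity $\bigcup_i (A \setminus B_i) = A \setminus \bigcap_i B_i$, I obtain
\begin{equation*}
E \cap U = (W_1 \cap U) \setminus B, \qquad B \coloneqq \bigcap_{\{i\,:\,A_i = W_1\}} B_i,
\end{equation*}
where $B$ is a Nash subset of $X$ with $W_1 \not\subseteq B$ (because each such $B_i$ already fails to contain $W_1$).

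Finally I would apply the dimension-density principle. Since $W_1$ is Nash irreducible and $W_1 \not\subseteq B$, we have $\dim(W_1 \cap B) < d$, whereas $W_1 \cap U$ is a Nash submanifold of dimension $d$; hence $W_1 \cap B \cap U$ cannot contain any nonempty open subset of $W_1 \cap U$, so $(W_1 \cap U) \setminus B$ is Euclidean-dense in $W_1 \cap U$. In particular $x$ lies in the Euclidean closure of $E \cap U$, and since $E$ is closed in $X$ this forces $x \in E$, as required.

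The main obstacle is the bookkeeping in the first step: one must carefully refine the Nash constructible presentation of $E$ so that the Nash closure $W$ can be read off directly and so that localizing at a point of $\Wns$ reduces the problem to a single irreducible component. Once this normalization is in place, the rest is the standard dimension-density argument combined with the hypothesis that $E$ is Euclidean closed.
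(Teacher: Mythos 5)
Your proposal is correct and is essentially the paper's own argument: both decompose $E$ into pieces $A_i \setminus B_i$ with $A_i$ Nash irreducible, observe that each Nash irreducible component of $W$ is covered by such pieces up to a Nash subset of strictly smaller dimension, and conclude by Euclidean density together with the closedness of $E$; the paper merely runs this globally (using the exceptional set $W_j' = \bigcup_{i} Z_i'$ inside each component $W_j$) rather than localizing at a point of $\Wns$. The one small inaccuracy is that your displayed identity $E \cap U = (W_1 \cap U)\setminus B$ should be the containment $E \cap U \supseteq (W_1 \cap U)\setminus B$, since terms with $A_i \subsetneq W_1$ may also meet $U$ --- but the containment is all your density argument needs.
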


\begin{proof}
The set $E$ can be written as a finite union
\begin{equation*}
E = \bigcup_{i \in I} (Z_i \setminus Z'_i),
\end{equation*}
where $Z'_i \subseteq Z_i$ are Nash subsets of X with $Z'_i \neq Z_i$
and $Z_i$ Nash irreducible for all $i \in I$. Note that $Z_i \subseteq
W$ because $Z_i \setminus Z_i' \subseteq W$ and the Nash closure of $Z_i
\setminus Z_i'$ in $X$ is equal to $Z_i$. Let
$W_1, \ldots, W_r$ be the Nash irreducible components of $W$. Since
$Z_i$ is Nash irreducible, we have $Z_i \subseteq W_j$ for some~$j$.
Setting $I_j \coloneqq \{ i \in I \colon Z_i \subseteq W_j \}$, we get
\begin{equation*}
W_j = \bigcup_{i \in I_j} Z_i
\end{equation*}
for all $j = 1, \ldots, r$. Furthermore, if
\begin{equation*}
W'_j \coloneqq \bigcup_{i \in I_j} Z'_i,
\end{equation*}
then $\dim W'_j < \dim W_j$ and $W_j \setminus W'_j \subseteq E$. In
particular, $\Wns_j \setminus W'_j \subseteq E$ for every~$j$. Since
$\Wns_j \setminus W'_j$ is Euclidean dense in $\Wns_j$, we get $\Wns_j
\subseteq E$ ($E$ is closed in the Euclidean topology). Hence
\begin{equation*}
\Wns \subseteq \Wns_1 \cup \ldots \cup \Wns_r \subseteq E,
\end{equation*}
as required.
\end{proof}

\begin{proposition}\label{prop-2-2}
Let $X$ be a Nash manifold and let
\begin{equation*}
E_1 \supseteq E_2 \supseteq E_3 \supseteq \cdots
\end{equation*}
be a chain of closed (in the Euclidean topology) Nash constructible
subsets of $X$. Then the chain is stationary, that is, there exists a
positive integer $m$ such that $E_m = E_i$ for all $i \geq m$.
\end{proposition}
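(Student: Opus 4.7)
The plan is to combine Noetherianity of the Nash topology with Proposition~\ref{prop-2-1}, and then run an induction on dimension. The key observation is that Proposition~\ref{prop-2-1} forces a closed Nash constructible set to contain the entire regular part of its Nash closure, so only a strictly lower-dimensional ``singular remainder'' can vary as the chain descends.

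More precisely, I would first let $W_i$ denote the Nash closure of $E_i$ in $X$. Since $\NC(X)$ is Noetherian, the Nash topology on $X$ is Noetherian, hence the chain $W_1 \supseteq W_2 \supseteq \cdots$ stabilizes; say $W_i = W$ for all $i \geq N$. By Proposition~\ref{prop-2-1}, $\Wns \subseteq E_i$ for every $i \geq N$. I would then induct on $d := \dim W$, which is a nonnegative integer bounded by $\dim X$. If $d \leq 0$, then $W$ is a finite set, so the $E_i$ (for $i \geq N$) all lie in a common finite set and the chain is trivially stationary. For $d > 0$, set $F_i := E_i \cap \Wsing$ for $i \geq N$. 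Each $F_i$ is closed in the Euclidean topology (intersection of closed sets) and Nash constructible in $X$ (the Nash constructible sets form a Boolean algebra containing $\Wsing$), and one has the decomposition $E_i = \Wns \cup F_i$. The Nash closure of $F_N$ is contained in $\Wsing$, whose dimension is strictly less than $d$, so the inductive hypothesis produces some $M \geq N$ with $F_i = F_M$ for all $i \geq M$. Then $E_i = \Wns \cup F_M = E_M$ for all $i \geq M$, as required.

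The main point I expect to watch carefully is arranging the induction so that the new chain $F_N \supseteq F_{N+1} \supseteq \cdots$ consists of closed Nash constructible subsets of the ambient Nash manifold $X$ itself, to which the inductive hypothesis and Proposition~\ref{prop-2-1} apply directly, rather than of the singular set $\Wsing$, which need not be a manifold. Keeping the $F_i$ embedded in $X$ and tracking only the dimension of their Nash closure in $X$ (bounded by $\dim \Wsing < d$) sidesteps this issue without having to develop any analogue of the present theory on singular spaces.
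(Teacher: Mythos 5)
Your proof is correct and takes essentially the same route as the paper: stabilize the chain of Nash closures using Noetherianity of the Nash topology, apply Proposition~\ref{prop-2-1} to reduce to the residual chain $E_i \cap \Wsing = E_i \setminus \Wns$ inside the lower-dimensional Nash set $\Wsing$, and conclude by induction on dimension. The only cosmetic differences are that the paper inducts on $\dim W_1$ rather than on $\dim W$ and writes the residual sets as $E_i \setminus \Wns$ instead of $E_i \cap \Wsing$ (these coincide since $E_i \subseteq W$).
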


\begin{proof}
Let $W_i$ be the Nash closure of $E_i$ in $X$. We use induction on $\dim
W_1$. The case $\dim W_1 = 0$ is obvious since then $W_1$ is a finite
set.

Suppose that $\dim W_1 > 0$. The chain
\begin{equation*}
W_1 \supseteq W_2 \supseteq W_3 \supseteq \cdots
\end{equation*}
of Nash subsets of $X$ is stationary, hence there exists a positive
integer $l$ such that $W_l = W_i$ for all $i \geq l$. Set $W \coloneqq
W_l$. By Proposition~\ref{prop-2-1}, we get a chain
\begin{equation*}
E_l \setminus \Wns \supseteq E_{l+1} \setminus \Wns \supseteq E_{l+2}
\setminus \Wns \supseteq \cdots
\end{equation*}
of closed (in the Euclidean topology) Nash constructible subsets of $X$.
This chain is stationary by the induction hypothesis since $E_l
\setminus \Wns \subseteq \Wsing$ and $\dim \Wsing < \dim W \leq \dim
W_1$. It follows that the chain
\begin{equation*}
E_l \supseteq E_{l+1} \supseteq E_{l+2} \supseteq \cdots
\end{equation*}
is stationary, which completes the proof.
\end{proof}

In view of Proposition~\ref{prop-2-2}, the collection of all closed (in
the Euclidean topology) Nash constructible subsets of $X$ forms the
family of closed sets for a Noetherian topology on~$X$, called the
\emph{Nash constructible topology}. The reader may compare this with the
constructions of topologies in \cite{bib4, bib23, bib24, bib26}.

\section{Nash \texorpdfstring{$k$}{k}-regulous functions}\label{sec-3}

Recall that a function $f$ on an analytic manifold $M$ is said to be
\emph{arc-analytic} if for every analytic arc $\gamma \colon (-1,1) \to
M$ the composite $f \circ \gamma$ is an analytic function, cf.
\cite{bib23}.

\begin{proposition}\label{prop-3-1}
Let $X$ be a Nash manifold and let $f$ be a Nash regulous function on
$X$. Then $f$~is a semialgebraic arc-analytic function, and its zero
locus $Z(f)$ is a Nash constructible set.
\end{proposition}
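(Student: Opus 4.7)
My plan has three parts, corresponding to the three claims.

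For semialgebraicity, continuity of $f$ forces the identity $\varphi = f\psi$ to hold on all of $X$, extending from the dense open set $X \setminus Z(\psi)$ where $f = \varphi/\psi$. The graph of $f$ then equals the closure in $X \times \R$ of the semialgebraic set $\{(x,y) : x \in X \setminus Z(\psi),\ \psi(x)y = \varphi(x)\}$ (by continuity), hence is itself semialgebraic by Tarski--Seidenberg; this is precisely the argument sketched after Example~\ref{ex-1-3}.

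For arc-analyticity, I would fix an analytic arc $\gamma \colon (-1,1) \to X$ and set $u := \varphi \circ \gamma$ and $v := \psi \circ \gamma$, both analytic. If $v$ is not identically zero in a neighborhood of a given $t_0$, the zero of $v$ at $t_0$ is isolated and $(f \circ \gamma)(t) = u(t)/v(t)$ off $t_0$; since $f \circ \gamma$ is continuous, the meromorphic quotient $u/v$ has no actual pole at $t_0$ and extends analytically. The harder case is $v \equiv 0$ near $t_0$, so that $\gamma$ maps into the Nash set $Z(\psi)$. To handle it I would invoke a Nash version of Hironaka's resolution applied to the ideal generated by $\varphi$ and $\psi$: a proper Nash map $\pi \colon \tilde X \to X$, a composition of blowups along smooth Nash centers, after which $\varphi \circ \pi$ and $\psi \circ \pi$ become, locally, monomials times Nash units. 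Continuity of $f$ then forces the monomial exponent of $\varphi \circ \pi$ to dominate that of $\psi \circ \pi$ componentwise, so $f \circ \pi$ is Nash on $\tilde X$. The arc $\gamma$ lifts to an analytic arc $\tilde\gamma \colon (-1,1) \to \tilde X$ by properness of $\pi$, and $f \circ \gamma = (f \circ \pi) \circ \tilde\gamma$ is therefore analytic.

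For the zero locus, the identity $f\psi = \varphi$ produces the decomposition
\[
Z(f) = \bigl( Z(\varphi) \setminus Z(\psi) \bigr) \cup \bigl( Z(f) \cap Z(\psi) \bigr),
\]
whose first term is plainly Nash constructible. I would finish by induction on $\dim X$: write $Z(\psi) = W_1 \cup \cdots \cup W_r$ as its Nash irreducible decomposition, and reduce the problem to proving each $Z(f) \cap W_i$ is Nash constructible. On the open dense Nash submanifold $\Reg_{d}(W_i)$, I would realize $f|_{\Reg_d(W_i)}$ as a Nash regulous function on a Nash manifold of strictly smaller dimension, by exploiting the factorisation of $\varphi$ and $\psi$ extracted from the resolution $\pi$ above (or equivalently by dividing $\varphi, \psi$ by a suitable Nash equation of $W_i$); the remaining piece $W_i \setminus \Reg_d(W_i)$ has still smaller dimension and is handled by a second application of the inductive hypothesis.

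The main obstacle is the arc-analyticity step in the degenerate case $v \equiv 0$: producing a Nash resolution of the indeterminacy locus of $\varphi/\psi$ and lifting the analytic arc through it. The same resolution, or equivalently the divisibility analysis it encodes along the Nash irreducible components of $Z(\psi)$, is precisely what is needed to carry out the inductive step in the Nash constructibility of $Z(f)$, so both claims rest on one technical ingredient.
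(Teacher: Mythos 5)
Your handling of semialgebraicity (closure of the graph over $X \setminus Z(\psi)$ plus Tarski--Seidenberg) is correct and is essentially what the paper's appeal to \cite[Proposition~2.2.2]{bib2} amounts to, and your treatment of arc-analyticity at points where $\psi \circ \gamma \not\equiv 0$ is fine. For the two hard assertions, however, the paper proceeds quite differently: it assumes $X$ connected, uses the Artin--Mazur theorem \cite[Theorem~8.4.4]{bib2} to produce a Nash diffeomorphism $\sigma$ of $X$ onto an open semialgebraic subset of a nonsingular irreducible algebraic set along which $\varphi$ and $\psi$ become restrictions of polynomials, and then simply quotes the results of \cite{bib8} on continuous rational (curve-rational) functions on algebraic varieties to get both the arc-analyticity of $f \circ \sigma^{-1}$ and the Nash constructibility of its zero locus. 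You instead try to reprove those results from scratch in the Nash category, and as written this leaves real gaps: the monomialization of the ideal $(\varphi,\psi)$ by Nash blowups, and the lifting of an arbitrary analytic arc (including arcs lying entirely inside the centers and exceptional divisors) through the whole tower, are each genuine theorems that you only invoke. The clean way to make them available is precisely the Artin--Mazur reduction to an algebraic model that the paper uses.

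The more serious gap is in your inductive step for $Z(f)$. You need $f|_{\Reg_d(W_i)}$ to be Nash regulous on the lower-dimensional Nash manifold $\Reg_d(W_i)$, and you justify this ``by dividing $\varphi$, $\psi$ by a suitable Nash equation of $W_i$''. This does not work as stated: an irreducible component $W_i$ of $Z(\psi)$ can have codimension greater than one in $X$ (already $Z(x^2+y^2)$ shows this, and Example~\ref{ex-1-3} is of exactly this shape), so there is no single Nash equation to divide by; and even in codimension one, $\varphi$ and $\psi$ vanish along $W_i$ to orders that need not agree, and the quotient of the cofactors need not restrict to $f|_{W_i}$. What your induction actually requires is the ``hereditary'' property: a continuous function that is a quotient of Nash functions off a nowhere dense set remains of that form after restriction to each component of the indeterminacy locus. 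In the algebraic setting this is the main theorem of Koll\'ar--Nowak \cite{bib9} and of \cite{bib8}, proved by a delicate resolution argument rather than a one-line division. It is the single ingredient your whole argument rests on, and it must either be proved or, as the paper does, imported from the algebraic case.
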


\begin{proof}
As we already observed in Section~\ref{sec-1}, $f$ is a semialgebraic
function.

For the proof of the other assertions we may assume that $X$ is
connected (in the Euclidean topology). Let $\varphi$, $\psi$ be Nash
functions on $X$ such that $Z(\psi) \neq X$ and $f = \varphi/\psi$ on $X
\setminus Z(\psi)$. By the Artin--Mazur theorem
\cite[Theorem~8.4.4]{bib2}, there exist a nonsingular irreducible
algebraic set $V \subseteq \R^m$, an open semialgebraic subset $X'
\subseteq V$, a Nash diffeomorphism ${\sigma \colon X \to X'}$, and two
polynomial functions $p$, $q$ on $\R^m$ such that
\begin{equation*}
p(\sigma(x)) = \varphi(x) \quad \textrm{and} \quad q(\sigma(x)) =
\psi(x) \quad \textrm{for all} \ x \in X.
\end{equation*}
Evidently, the function $g \coloneqq f \circ \sigma^{-1} \colon X' \to
\R$ is continuous and $g = p/q$ on $X' \cap (V \setminus Z(q))$. In view
of \cite[Proposition~4.2, Theorem~1.12]{bib8}, $g$ is an arc-analytic
function. Furthermore, according to \cite[Propositions~4.2
and~3.5]{bib8}, $Z(g)$ is a Nash constructible subset of $X'$. The proof
is complete since $\sigma$ is a Nash diffeomorphism.
\end{proof}

We note the following.

\begin{corollary}\label{cor-3-2}
Let $X$ be a Nash manifold and let $F$ be a collection of Nash regulous
functions on~$X$. Then the zero locus $Z(F)$ is a closed subset of $X$
in the Nash constructible topology.
\end{corollary}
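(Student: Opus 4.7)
The plan is to combine Proposition~\ref{prop-3-1}, which handles one function at a time, with the Noetherian property of the Nash constructible topology, which handles the passage to an arbitrary collection.

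First I would observe that for every individual $f \in F$, Proposition~\ref{prop-3-1} gives that $Z(f)$ is a Nash constructible subset of $X$; moreover, since $f$ is of class $\C^0$, its zero locus $Z(f)$ is closed in the Euclidean topology. Thus each $Z(f)$ is closed in the Nash constructible topology. Writing
\begin{equation*}
Z(F) = \bigcap_{f \in F} Z(f),
\end{equation*}
we express $Z(F)$ as an intersection of closed sets in the Nash constructible topology.

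The remaining issue is that $F$ need not be finite, so we must justify that an arbitrary intersection of Nash constructible closed sets is Nash constructible closed. This is where I would invoke Proposition~\ref{prop-2-2}: the Nash constructible topology on $X$ is Noetherian. In any Noetherian topology, arbitrary intersections of closed sets reduce to finite intersections, because the family of finite subintersections, ordered by reverse inclusion, is a descending chain (or can be refined to one) that must stabilize; the stable value then equals the full intersection. Since finite intersections of closed Nash constructible sets are again closed Nash constructible (the Nash constructible sets form a Boolean algebra, and finite intersections of Euclidean closed sets are Euclidean closed), the conclusion follows.

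I do not expect any real obstacle: the only subtlety is the standard observation that Noetherianity promotes finite-intersection stability to arbitrary-intersection stability, and both ingredients (Proposition~\ref{prop-3-1} and Proposition~\ref{prop-2-2}) are already in hand.
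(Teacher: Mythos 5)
Your proof is correct and follows essentially the same route as the paper: Proposition~\ref{prop-3-1} shows each $Z(f)$ is Nash constructible (and Euclidean closed by continuity of $f$), and $Z(F)=\bigcap_{f\in F}Z(f)$ is then closed in the Nash constructible topology. The paper simply invokes the fact that arbitrary intersections of closed sets in a topology are closed (this stability is already built into the assertion, following Proposition~\ref{prop-2-2}, that the Nash constructible closed sets form the closed sets of a topology), whereas you re-derive that stability from Noetherianity --- a correct but redundant elaboration.
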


\begin{proof}
This assertion holds since
\begin{equation*}
Z(F) = \bigcap_{f \in F}Z(f)
\end{equation*}
and, by Proposition~\ref{prop-3-1}, each set $Z(f)$ is closed in the
Nash constructible topology.
\end{proof}

It is convenient to generalize Definition~\ref{def-1-1} as follows.

\begin{definition}\label{def-3-3}
Let $X$ be a Nash manifold and let $U \subseteq X$ be an open subset (in
the Euclidean topology). A function $f$ on $U$ is said to be \emph{Nash
$(X,k)$-regulous}, where $k$ is a nonnegative integer, if it is of class
$\C^k$ and there exist two Nash functions $\varphi$, $\psi$ on $X$ such
that the set $Z(\psi)$ is nowhere dense in $X$ and $f=\varphi/\psi$ on
$U \setminus Z(\psi)$.
\end{definition}

The set $\NC_X^k(U)$ of all Nash $(X,k)$-regulous functions on $U$ forms
a ring. It depends on the triple $(X, U, k)$ and not on the pair $(U,k)$
alone. If $U' \subseteq U$ is an open subset, then there is a
well-defined homomorphism
\begin{equation*}
\NC_X^k(U) \to \NC_X^k(U')
\end{equation*}
determined by the restriction of functions. Obviously, $\NC_X^k(X) =
\NC^k(X)$.

The following is a Nash $k$-regulous version of \cite[Lemma~5.1]{bib4}.

\begin{proposition}\label{prop-3-4}
Let $X$ be a Nash manifold, $k$ a nonnegative integer, $f \colon X \to
\R$ a Nash $k$-regulous function, and $g \colon X \setminus Z(f) \to \R$
a Nash $(X,k)$-regulous function. Then, for each integer~$N$ large
enough, the function $h \colon X \to \R$ defined by
\begin{equation*}
h = f^Ng \quad \textrm{on} \ X \setminus Z(f) \quad \textrm{and} \quad h=0
\quad \textrm{on} \ Z(f)
\end{equation*}
is Nash $k$-regulous.
\end{proposition}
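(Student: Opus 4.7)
The plan is to produce an explicit Nash quotient presentation for $h$ and then verify $\C^k$-regularity by combining the Leibniz rule with Lojasiewicz-type inequalities for semialgebraic functions.

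First I would fix Nash functions $\varphi, \psi$ on $X$ with $Z(\psi)$ nowhere dense and $f = \varphi/\psi$ on $X \setminus Z(\psi)$, together with Nash functions $\varphi_1, \psi_1$ on $X$ with $Z(\psi_1)$ nowhere dense and $g = \varphi_1/\psi_1$ on $(X \setminus Z(f)) \setminus Z(\psi_1)$. Set $P \coloneqq \varphi^N \varphi_1$ and $Q \coloneqq \psi^N \psi_1$; then $Z(Q) = Z(\psi) \cup Z(\psi_1)$ is nowhere dense in $X$. A direct computation, splitting according to whether the point belongs to $Z(f)$, yields $h = P/Q$ on $X \setminus Z(Q)$, so the Nash quotient form holds for every $N \geq 1$ and the only remaining issue is that $h$ be of class $\C^k$.

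Outside $Z(f)$, $h = f^N g$ is a product of $\C^k$ functions, and near a point of $Z(f) \setminus Z(\psi_1)$ the function $\varphi_1/\psi_1$ is Nash in a neighborhood and agrees with $g$ there, so $h$ is $\C^k$ at such a point for any $N \geq 1$. The substantive case is a point $x_0 \in Z(f) \cap Z(\psi_1)$, where $g$ may blow up. To handle it, I would work on a compact semialgebraic neighborhood $K$ of $x_0$ and apply the semialgebraic Lojasiewicz inequality to the continuous semialgebraic functions $\partial^\beta g$ for $|\beta| \leq k$, producing constants $\alpha_\beta, C_\beta > 0$ with $|f(x)|^{\alpha_\beta} |\partial^\beta g(x)| \leq C_\beta$ on $K \setminus Z(f)$. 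Simultaneously, the Leibniz expansion of $f^N$ shows that each term of $\partial^\delta(f^N)$ carries at least $N - |\delta|$ factors of $f$, which gives $|\partial^\delta(f^N)(x)| \leq C'_\delta |f(x)|^{N - |\delta|}$ on $K$ for $|\delta| \leq k$ and $N \geq k$. Substituting both bounds into the Leibniz expansion of $\partial^\gamma(f^N g)$ controls each term by a constant multiple of $|f(x)|^{N - k - \alpha}$, where $\alpha \coloneqq \max_{|\beta| \leq k} \alpha_\beta$. Thus for $N > k + \alpha$ all partials of order $\leq k$ of $f^N g$ tend to zero as $x \to Z(f) \cap K$; a standard extension lemma then gives $\C^k$-regularity of $h$ near $x_0$, with vanishing partials on $Z(f)$.

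The main obstacle is to upgrade these locally chosen integers to a single global $N$ valid at every point of $Z(f) \cap Z(\psi_1)$, since the Lojasiewicz exponents $\alpha_\beta$ are a priori local. I would handle this by reducing to the algebraic setting via the Artin--Mazur theorem, as in the proof of Proposition~\ref{prop-3-1}, applied simultaneously to $\varphi, \psi, \varphi_1, \psi_1$: after the identification $X$ becomes an open semialgebraic subset of a nonsingular algebraic set and the four Nash functions become restrictions of polynomial functions, so \cite[Lemma~5.1]{bib4} (or a direct adaptation of its argument) yields a single $N$ valid on all of $X$.
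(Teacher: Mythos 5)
Your plan is sound and its core is the same as the paper's: bound the derivatives of $g$ by a negative power of $|f|$ via the semialgebraic \L{}ojasiewicz inequality, bound the derivatives of $f^N$ by $|f|^{N-k}$ via Leibniz, and conclude that for $N$ large all derivatives of $f^Ng$ up to order $k$ vanish along $Z(f)$, so the zero-extension is $\C^k$ and $k$-flat there. (Your explicit quotient presentation $P/Q$ is correct and worth keeping; the paper is terser on that point.) The genuine difference is in how the \L{}ojasiewicz step is organized, and here your route is more complicated than it needs to be. You run the inequality locally on compact neighborhoods $K$, obtaining exponents $\alpha_\beta$ and constants $C_\beta$ that depend on $K$, and then must globalize; the paper instead applies \cite[Proposition~2.6.4]{bib2} directly on the locally closed semialgebraic set $X\setminus Z(f)$, which already produces a single global exponent $r$ such that $f^r\xi^\alpha(g)$ extends by zero continuously to all of $X$ --- no compactness, no patching. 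Your proposed globalization is also the weakest link as written: after the Artin--Mazur reduction, $X$ is only an \emph{open semialgebraic} subset of a nonsingular algebraic set, whereas \cite[Lemme~5.1]{bib4} concerns $k$-regulous functions on algebraic varieties, so it does not apply verbatim; your fallback (``a direct adaptation of its argument'') is really just the global semialgebraic \L{}ojasiewicz inequality again, at which point the Artin--Mazur detour buys nothing. Finally, note that the paper replaces your ambient partial derivatives $\partial^\beta g$ by iterates of globally defined Nash vector fields generating the tangent bundle of $X$; this is a clean way to guarantee that the derivatives of $g$ are globally defined continuous \emph{semialgebraic} functions on $X\setminus Z(f)$ (a hypothesis you need before invoking \L{}ojasiewicz), rather than arguing chart by chart. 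With these two adjustments --- global \L{}ojasiewicz on $X\setminus Z(f)$ and Nash vector fields in place of coordinates --- your argument becomes exactly the paper's.
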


\begin{proof}
The function $f$ is semialgebraic. Similarly, in view of
\cite[Proposition~2.2.2]{bib2}, $g$ is also semialgebraic. Consequently,
according to \cite[Proposition~2.6.4]{bib2}, $h$ is continuous if $N$ is
large enough. Note that then $h$ is actually a Nash regulous function.
Thus it suffices to prove that for each integer~$s$ large enough the
product $f^s g$, defined on $X \setminus Z(f)$, extends by zero through
the zero locus $Z(f)$ to a~$\C^k$ function on $X$. Instead of computing
partial derivatives in some local coordinates, it seems more convenient
to make use of globally defined vector fields on $X$.

Let $\eta$ be a Nash vector field on $X$ and let $u$ be a function of
class $\C^p$, where $p \geq 1$, defined on an open subset $U \subseteq
X$. By applying $\eta$ to $u$, we get a function $\eta u = \eta(u)$ of
class $\C^{p-1}$ on~$U$. If $u$~is a semialgebraic function (hence $U$
is a semialgebraic set), then so is $\eta u$. Given an integer $q$, with
$0 \leq q \leq p$, we define $\eta^q u$ recursively:
\begin{equation*}
\eta^q u = u \quad \textrm{for} \ q=0 \quad \textrm{and} \quad \eta^q u =
\eta(\eta^{q-1} u) \quad \textrm{for} \ q \geq 1.
\end{equation*}

Now let $\xi = (\xi_1, \ldots, \xi_n)$ be an $n$-tuple of Nash vector
fields that generate the tangent bundle to~$X$. For any $n$-tuple
$\alpha = (\alpha_1, \ldots, \alpha_n)$ of nonnegative integers, with
$|\alpha| \coloneqq \alpha_1 + \cdots + \alpha_n \leq k$, the function
\begin{equation*}
\xi^{\alpha}(g) \coloneqq \xi_1^{\alpha_1} \cdots \xi_n^{\alpha_n} g
\end{equation*}
is a continuous semialgebraic function on $X \setminus Z(f)$. By
\cite[Proposition~2.6.4]{bib2}, we can choose a positive integer $r$ so
that the functions
\begin{equation*}
f^r \xi^{\alpha}(g) \quad \textrm{for all} \ \alpha \ \textrm{with} \
|\alpha| \leq k
\end{equation*}
extend by zero to continuous functions on $X$. Hence, by the Leibniz
rule, $f^{r+k}g$ extends by zero to a function of class $\C^k$ on $X$,
which is $k$-flat on $Z(f)$.
\end{proof}

Proposition~\ref{prop-3-4} plays an essential role in the proof of the
following.

\begin{theorem}\label{th-3-5}
Let $X$ be a Nash manifold, $E \subseteq X$ some subset, and $k$ a
nonnegative integer. Then the following conditions are equivalent:
\begin{conditions}
\item\label{th-3-5-a} $E$ is closed in the Nash constructible topology.

\item\label{th-3-5-b} $E = Z(f)$ for some Nash $k$-regulous function $f$
on $X$.

\item\label{th-3-5-c} $E = Z(F)$ for some collection $F$ of Nash
$k$-regulous functions on $X$.
\end{conditions}
Furthermore, if \ref{th-3-5-c} holds, then there exist functions $g_1,
\ldots, g_r$ in $F$ such that
\begin{equation*}
E = Z(g) \quad \textrm{with} \ g = g_1^2 + \cdots + g_r^2.
\end{equation*}
\end{theorem}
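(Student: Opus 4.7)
I plan to establish the equivalences via the chain $(b)\Rightarrow(c)\Rightarrow(a)\Rightarrow(b)$ and then derive the sum-of-squares claim from Noetherianity.

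The implication $(b)\Rightarrow(c)$ is immediate upon taking $F=\{f\}$, and $(c)\Rightarrow(a)$ is Corollary~\ref{cor-3-2}. The bulk of the work is $(a)\Rightarrow(b)$, which I prove by induction on $\dim W$, where $W$ denotes the Nash closure of $E$ in $X$. For the base case $\dim W = 0$, $W$ is a finite point set and $f(x) = \prod_{p \in E} \|x - p\|^2$ suffices. For the inductive step, Proposition~\ref{prop-2-1} gives $\Wns \subseteq E$, so $E = \Wns \cup E'$ with $E' \coloneqq E \cap \Wsing$; this $E'$ is again Euclidean-closed and Nash constructible, and its Nash closure lies in $\Wsing$, whose dimension is strictly less than $\dim W$. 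The inductive hypothesis yields $h \in \NC^k(X)$ with $Z(h) = E'$. Fixing Nash functions $\varphi$, $\theta$ with $Z(\varphi) = W$ and $Z(\theta) = \Wsing$, the plan is to construct a Nash $(X,k)$-regulous function $g$ on $X \setminus Z(h) = X \setminus E'$ satisfying $Z(g) = \Wns$; Proposition~\ref{prop-3-4} applied to this $g$ and to $h$ then produces, for $N$ sufficiently large, a Nash $k$-regulous function $h^N g$ (extended by zero across $E'$) whose zero locus is $E' \cup \Wns = E$.

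The hard step is the construction of $g$. Both $\Wns$ and $\Wsing \setminus E'$ are contained in $Z(\varphi) = W$, so any straightforward product involving $\varphi$ as a factor will vanish on the too-large set $W$. The key resource for separating them is $\theta$: it vanishes on $\Wsing$ (in particular on $\Wsing \setminus E'$) and is nonzero on $\Wns$, while $h$ is nonzero throughout $X \setminus E'$. I intend to build $g$ as a rational expression in $\varphi$, $\theta$, and $h$ that reduces to $\varphi$ away from $\Wsing$ (where it correctly detects $\Wns$) while picking up a nonvanishing contribution of $h$ near $\Wsing \setminus E'$, using a denominator of the form $\theta^{2M} + h^{2N}$ (which is nonzero off $E'$) to glue the two regimes together. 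The $\C^k$-regularity across the transition set is then recovered from the Łojasiewicz-type estimates implicit in Proposition~\ref{prop-3-4} once one multiplies by a sufficient power of $h$; this is the point where the Nash $k$-regulous framework is strictly richer than the Nash one, since $\Wns$ is not generally a Nash set.

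For the ``furthermore'' clause, suppose $E = Z(F)$ with $F \subseteq \NC^k(X)$. Each set $Z(g_1, \ldots, g_m) = Z(g_1^2 + \cdots + g_m^2)$ with $g_i \in F$ is closed in the Nash constructible topology by Corollary~\ref{cor-3-2}. Starting from any $g_1 \in F$, whenever $Z(g_1, \ldots, g_m) \supsetneq E$, one picks a point $x$ in the difference and some $g \in F$ with $g(x) \neq 0$, obtaining $Z(g_1, \ldots, g_m, g) \subsetneq Z(g_1, \ldots, g_m)$. Proposition~\ref{prop-2-2} forces this descending chain of closed Nash constructible sets to terminate at some $Z(g_1, \ldots, g_r)$ which must then be contained in $Z(g)$ for every $g \in F$; hence $Z(g_1, \ldots, g_r) \subseteq Z(F) = E$, and combined with the reverse inclusion this gives $E = Z(g_1, \ldots, g_r) = Z(g_1^2 + \cdots + g_r^2)$. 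Since $g_1^2 + \cdots + g_r^2 \in \NC^k(X)$, this not only proves the ``furthermore'' claim but also yields $(c)\Rightarrow(b)$ as a direct by-product.
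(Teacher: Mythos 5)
Your overall architecture matches the paper's: induction on $\dim W$, Proposition~\ref{prop-2-1} to get $\Wns \subseteq E$, the splitting $E = \Wns \cup E'$ with $E' = E \cap \Wsing$ handled by the inductive hypothesis, a final application of Proposition~\ref{prop-3-4} to glue via $h^N g$, and the Noetherian descending-chain argument for the ``furthermore'' clause (which indeed also delivers \ref{th-3-5-c}$\Rightarrow$\ref{th-3-5-b}). The base case, the reduction, and the last paragraph are all fine.

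However, the heart of the proof --- producing a Nash $(X,k)$-regulous function $g$ on $X \setminus E'$ with $Z(g) = \Wns$ --- is left as a plan rather than carried out, and the mechanism you propose for it does not work. You say the $\C^k$-regularity of $g$ across the transition set $\Wsing \setminus E'$ will be ``recovered \dots once one multiplies by a sufficient power of $h$,'' but $h$ is nonvanishing on $\Wsing \setminus E'$ (its zero set is exactly $E'$), so Proposition~\ref{prop-3-4} gives you smoothing only across $E'$, not across the set where your two regimes actually meet. Likewise no explicit rational expression with denominator $\theta^{2M}+h^{2N}$ is exhibited or verified. The paper's resolution is a specific trick you have not found: with $\varphi, \psi$ Nash functions satisfying $Z(\varphi)=W$ and $Z(\psi)=\Wsing$, one first applies Proposition~\ref{prop-3-4} to $\psi|_{X\setminus E}$ (whose zero locus is $\Wsing\setminus E'$) and to $1/\varphi^2$ to conclude that $\alpha = \psi^{2N}/\varphi^2$, extended by $0$ across $\Wsing\setminus E'$, is Nash $(X,k)$-regulous on $X\setminus E$ for $N$ large; then
\begin{equation*}
\beta = \frac{\varphi^2}{\varphi^2+\psi^{2N}} = \frac{1}{1+\alpha}
\end{equation*}
is of class $\C^k$ on $X\setminus E$, is a quotient of Nash functions with nonvanishing denominator on $X\setminus \Wsing$, and these two open sets cover $X\setminus E'$; moreover $Z(\beta)=W\setminus\Wsing=\Wns$. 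Without this (or an equivalent) construction, the inductive step is not established.
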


\begin{proof}
Suppose that condition \ref{th-3-5-a} is satisfied, and let $W \subseteq
X$ be the closure of $E$ in the Nash topology. We use induction on $\dim
W$ to prove that \ref{th-3-5-b} holds. The case $\dim W = 0$ is obvious
since then $W$ is a finite set.

Assume that $\dim W > 0$. By Proposition~\ref{prop-2-1}, $\Wns \subseteq
E$ and hence
\begin{equation*}
W = E \cup Z, \quad \textrm{where} \ Z = \Wsing = W \setminus \Wns.
\end{equation*}
Let $\varphi$ and $\psi$ be Nash functions on $X$ with $Z(\varphi) = W$
and $Z(\psi) = Z$. Consider the functions
\begin{gather*}
\frac{1}{\varphi^2} \colon X \setminus W = (X \setminus E) \setminus (Z
\setminus (E \cap Z)) \to \R,\\
\psi|_{X \setminus E} \colon X \setminus E \to \R.
\end{gather*}
Since $Z(\psi|_{X \setminus E}) = Z \setminus (E \cap Z)$, it follows
from Proposition~\ref{prop-3-4} that for a sufficiently large
integer~$N$ the function $\alpha \colon X \setminus E \to \R$ defined by
\begin{equation*}
\alpha = \frac{\psi^{2N}}{\varphi^2} \quad \textrm{on} \ X \setminus W
\quad \textrm{and} \quad \alpha=0 \quad \textrm{on} \ Z \setminus (E \cap Z)
\end{equation*}
is Nash $k$-regulous. Clearly, $\alpha$ is actually Nash
$(X,k)$-regulous.

We claim that the function $\beta \colon X \setminus (E \cap Z) \to \R$
defined by
\begin{equation*}
\beta = \frac{\varphi^2}{\varphi^2 + \psi^{2N}} \quad \textrm{on} \ X
\setminus Z \quad \textrm{and} \quad \beta=1 \quad \textrm{on} \ Z \setminus (E \cap
Z)
\end{equation*}
is Nash $(X,k)$-regulous. Indeed, since
\begin{equation*}
X \setminus (E \cap Z) = (X \setminus E) \cup (X \setminus Z) \quad
\textrm{and} \quad (X \setminus E) \cap (X \setminus Z) = X \setminus W,
\end{equation*}
it suffices to show that $\beta$ is of class $\C^k$ on $X \setminus E$.
However, on $X \setminus E$ we have $\beta = \frac{1}{1 + \alpha}$ which
proves the claim.

By construction,
\begin{equation*}
Z(\beta) = W \setminus Z = E \setminus (E \cap Z).
\end{equation*}
Since $\dim Z < \dim W$, it follows from the induction hypothesis that
there exists a Nash $k$-regulous function $\gamma \colon X \to \R$ with
$Z(\gamma) = E \cap Z$. In view of Proposition~\ref{prop-3-4} once
again, if $M$ is a sufficiently large integer, then the function $f
\colon X \to \R$ defined by
\begin{equation*}
f = \gamma^M \beta \quad \textrm{on} \ X \setminus (E \cap Z) \quad \textrm{and}
\quad f=0 \quad \textrm{on} \ E \cap Z
\end{equation*}
is Nash $k$-regulous. Evidently, $Z(f)=E$ hence \ref{th-3-5-b} holds.

It is clear that \ref{th-3-5-b} implies \ref{th-3-5-c}, while in turn
\ref{th-3-5-c} implies \ref{th-3-5-a} by Corollary~\ref{cor-3-2}. In
conclusion, conditions \ref{th-3-5-a}, \ref{th-3-5-b}, \ref{th-3-5-c}
are equivalent.

For the last assertion, suppose that condition \ref{th-3-5-c} holds. It
suffices to prove that $E = Z(g_1, \ldots, g_r)$ for some functions
$g_1, \ldots, g_r$ in $F$. If this were not the case we could choose
functions $g_1, g_2, \ldots$ in~$F$ for which
\begin{equation*}
Z(g_1) \supsetneqq Z(g_1, g_2) \supsetneqq \cdots
\end{equation*}
However, such a chain cannot exist, the Nash constructible topology on
$X$ being Noetherian.
\end{proof}

There are analogous results to Theorem~\ref{th-3-5} for $k$-regulous
functions \cite[Th\'eor\`eme~5.21 et 6.4]{bib4} and for arc-analytic
functions \cite[Theorem~1.1]{bib1a}.

In the remainder of this section we obtain counterparts of suitable
results for $k$-regulous functions~\cite{bib4}.

We investigate ideals in the ring of Nash $k$-regulous functions on
a Nash manifold $X$. Given a subset $E \subseteq X$, denote by
$J_{\NC^k(X)}(E)$ the ideal of $\NC^k(X)$ comprised of all functions
vanishing on~$E$,
\begin{equation*}
J_{\NC^k}(E) \coloneqq \{ f \in \NC^k(X) \colon f(x) = 0 \quad \textrm{for
all} \ x \in E \}.
\end{equation*}
As usual, the radical of an ideal $I$ of $\NC^k(X)$ will be denoted by
$\Rad(I)$.

We have the following variant of the Nullstellensatz for the ring
$\NC^k(X)$.

\begin{theorem}\label{th-3-6}
Let $X$ be a Nash manifold and let $I$ be an ideal of the ring
$\NC^k(X)$, where $k$ is a nonnegative integer. Then
\begin{equation*}
J_{\NC^k}(Z(I)) = \Rad(I).
\end{equation*}
\end{theorem}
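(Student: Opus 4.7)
The easy inclusion $\Rad(I) \subseteq J_{\NC^k}(Z(I))$ is immediate: if $f^n \in I$ then $f^n$ vanishes on $Z(I)$ and, since values are real, so does $f$. My plan for the reverse inclusion is to marry the sum-of-squares part of Theorem~\ref{th-3-5} with the flat-extension device of Proposition~\ref{prop-3-4}.

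As a preliminary, since $X$ is semialgebraic it has finitely many connected components $X_1, \ldots, X_s$, and the ring $\NC^k(X)$ together with the operations $Z$, $J_{\NC^k}$ and $\Rad$ all split as products over the $X_j$, so it is enough to argue on a single component and I may assume $X$ is connected. The case $I = (0)$ is then trivial ($J_{\NC^k}(X) = (0) = \Rad((0))$), so I may further assume $I$ contains a nonzero element.

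Under these assumptions, the last assertion of Theorem~\ref{th-3-5} applied to $F = I$ produces $g_1, \ldots, g_r \in I$ with $Z(I) = Z(g)$ where $g \coloneqq g_1^2 + \cdots + g_r^2 \in I$. A quick observation that I would record first is that, on a connected Nash manifold, every nonzero Nash $k$-regulous function has nowhere dense zero set: if a representation $h = \varphi/\psi$ were to vanish on a nonempty open set, analyticity of the Nash numerator $\varphi$ together with connectedness of $X$ would force $\varphi \equiv 0$, hence $h \equiv 0$. In particular, our $g$ is not identically zero and $Z(g) = Z(I)$ is nowhere dense in $X$.

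Now let $f \in J_{\NC^k}(Z(I))$, so that $Z(f) \supseteq Z(g)$ and $g$ is nowhere zero on $X \setminus Z(f)$. Then $1/g$ is of class $\Cinfty$ on $X \setminus Z(f)$, and taking $\varphi = 1$, $\psi = g$ in Definition~\ref{def-3-3} exhibits $1/g$ as a Nash $(X,k)$-regulous function on $X \setminus Z(f)$. Applying Proposition~\ref{prop-3-4} to $f$ and to this $1/g$, one obtains, for all sufficiently large $N$, a Nash $k$-regulous function $\lambda \colon X \to \R$ with $\lambda = f^N/g$ on $X \setminus Z(f)$ and $\lambda = 0$ on $Z(f)$. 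The identity $\lambda g = f^N$ holds on $X \setminus Z(f)$ by construction and on $Z(f)$ because both sides vanish, hence on all of $X$; since $g \in I$ this yields $f^N = \lambda g \in I$, whence $f \in \Rad(I)$. The one delicate point is spotting that Proposition~\ref{prop-3-4} should be fed $1/g$ rather than $g$: the inclusion $Z(g) \subseteq Z(f)$ is what licenses that choice and what allows $f^N$ to absorb the pole of $1/g$ along $Z(g)$.
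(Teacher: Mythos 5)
Your argument is correct and follows essentially the same route as the paper: pick $g=g_1^2+\cdots+g_r^2\in I$ with $Z(g)=Z(I)$ via Theorem~\ref{th-3-5}, feed $f$ and $1/g$ into Proposition~\ref{prop-3-4}, and conclude $f^N=\lambda g\in I$ (the reduction to connected components is extra care the paper omits). One small imprecision: Definition~\ref{def-3-3} requires \emph{Nash} functions $\varphi,\psi$, and $g$ is only Nash $k$-regulous, so rather than ``$\varphi=1$, $\psi=g$'' you should represent $1/g$ as $\psi_g/\varphi_g$ for a Nash representation $g=\varphi_g/\psi_g$ --- your observation that the numerator of a nonzero Nash $k$-regulous function on a connected $X$ has nowhere dense zero set is exactly what makes this legitimate.
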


\begin{proof}
It is clear that $\Rad(I) \subseteq J_{\NC^k} (Z(I))$.

For the converse
inclusion, we first pick a function $g \in I$ with $Z(g) = Z(I)$; this
is possible by Theorem~\ref{th-3-5}. Consider a function $f \in
J_{\NC^k}(Z(I))$. Note that $X \setminus Z(f) \subseteq X \setminus
Z(g)$, and $1/g$ is a Nash $(X,k)$-regulous function on $X \setminus
Z(g)$. Hence, by Proposition~\ref{prop-3-4}, there exists a positive
integer~$N$ such that the function $h$ on $X$ defined by
\begin{equation*}
h = \frac{f^N}{g} \quad \textrm{on} \ X \setminus Z(g) \quad
\textrm{and} \quad h=0 \quad \textrm{on} \ Z(g)
\end{equation*}
is Nash $k$-regulous. Since $f^N = gh$, we get $f \in \Rad(I)$, as
required.
\end{proof}

As a straightforward consequence of Theorems \ref{th-3-5} and
\ref{th-3-6} we get the following.

\begin{corollary}\label{cor-3-7}
Let $X$ be a Nash manifold and let $k$ be a nonnegative integer. Then the
assignment
\begin{equation*}
X \supseteq E \mapsto J_{\NC^k}(E) \subseteq \NC^k(X)
\end{equation*}
gives rise to one-to-one correspondences:
\begin{iconditions}
\item\label{cor-3-7-i} between the closed subsets of $X$ in the Nash
constructible topology and the radical ideals of $\NC^k(X)$;

\item\label{cor-3-7-ii} between the irreducible closed subsets of $X$ in
the Nash constructible topology and the prime ideals of $\NC^k(X)$;

\item\label{cor-3-7-iii} between the points of $X$ and the maximal
ideals of $\NC^k(X)$. \qed
\end{iconditions}
\end{corollary}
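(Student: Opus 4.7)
The plan is to read off \ref{cor-3-7-i}, \ref{cor-3-7-ii}, and \ref{cor-3-7-iii} from Theorems~\ref{th-3-5} and~\ref{th-3-6} by the classical Nullstellensatz bookkeeping, refining the resulting bijection in two stages.

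For \ref{cor-3-7-i}, I would verify that $E\mapsto J_{\NC^k}(E)$ and $I\mapsto Z(I)$ are mutually inverse on the two families in question. Given a closed Nash constructible $E\subseteq X$, Theorem~\ref{th-3-5} produces some $F\subseteq\NC^k(X)$ with $E=Z(F)$; since $F\subseteq J_{\NC^k}(E)$, this forces $Z(J_{\NC^k}(E))\subseteq Z(F)=E$, the opposite inclusion being immediate. Given a radical ideal $I$, Corollary~\ref{cor-3-2} ensures that $Z(I)$ is closed in the Nash constructible topology, while Theorem~\ref{th-3-6} gives $J_{\NC^k}(Z(I))=\Rad(I)=I$. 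Together these identities furnish the bijection, and $J_{\NC^k}(E)$ is always radical by construction.

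For \ref{cor-3-7-ii}, I would upgrade this to the irreducible/prime dictionary in the standard fashion. If $E=E_1\cup E_2$ properly with each $E_i$ closed Nash constructible, by \ref{cor-3-7-i} one can pick $f_i\in J_{\NC^k}(E_i)\setminus J_{\NC^k}(E)$, and $f_1f_2\in J_{\NC^k}(E)$ witnesses non-primality. Conversely, any relation $f_1f_2\in J_{\NC^k}(E)$ with $f_i\notin J_{\NC^k}(E)$ yields the nontrivial decomposition $E=(E\cap Z(f_1))\cup(E\cap Z(f_2))$, whose pieces are closed Nash constructible by Corollary~\ref{cor-3-2}.

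For \ref{cor-3-7-iii}, I would note that a singleton $\{x\}\subseteq X\subseteq\R^n$ is the zero set of the Nash function $y\mapsto\|y-x\|^2$, hence closed Nash constructible and trivially irreducible. Evaluation at $x$ defines a surjection $\NC^k(X)\to\R$ (surjective since the constants belong to $\NC^k(X)$) with kernel $J_{\NC^k}(\{x\})$, which is therefore maximal. Conversely, any maximal ideal is prime, so by \ref{cor-3-7-ii} it equals $J_{\NC^k}(E)$ for some irreducible closed Nash constructible $E$; because the bijection in \ref{cor-3-7-i} reverses inclusions and $E\neq\varnothing$ (the ideal being proper, $1\notin J_{\NC^k}(E)$), maximality forces $E$ to be minimal among nonempty closed Nash constructible sets, which, combined with the fact that every point already yields such a minimal set, pins down $E=\{x\}$. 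No genuine obstacle is expected here: the corollary is purely a formal consequence of the two main theorems, the inclusion-reversing character of the $Z \text{--} J_{\NC^k}$ correspondence, and the presence of constants in $\NC^k(X)$.
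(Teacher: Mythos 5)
Your proposal is correct and matches the paper's intent: the paper states Corollary~\ref{cor-3-7} as a ``straightforward consequence'' of Theorems~\ref{th-3-5} and~\ref{th-3-6} with no written proof, and your argument is exactly the standard Nullstellensatz bookkeeping (mutually inverse maps $E\mapsto J_{\NC^k}(E)$, $I\mapsto Z(I)$, then the irreducible/prime and point/maximal refinements) that this remark presupposes. All steps check out, including the use of Corollary~\ref{cor-3-2} for closedness of $Z(I)$ and the evaluation homomorphism for maximality.
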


For a function $f$ in $\NC^k(X)$, we let $\Rad(f)$ denote the radical of
the principal ideal generated by~$f$.

The following is a direct consequence of Theorem~\ref{th-3-6}.

\begin{corollary}\label{cor-3-8}
Let $X$ be a Nash manifold and let $f$ and $g$ be functions in the ring
$\NC^k(X)$, where $k$~is a nonnegative integer. Then $\Rad(f) = \Rad(g)$
if and only if $Z(f) = Z(g)$. \qed 
\end{corollary}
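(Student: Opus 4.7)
The plan is to deduce this statement directly from Theorem~\ref{th-3-6} applied to the principal ideals $(f)$ and $(g)$ of $\NC^k(X)$, so $\Rad(f) = \Rad((f))$ and $\Rad(g) = \Rad((g))$ by definition.

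First I would record the elementary fact that $Z(I) = Z(\Rad(I))$ for any ideal $I$ of $\NC^k(X)$: the inclusion $Z(\Rad(I)) \subseteq Z(I)$ is immediate from $I \subseteq \Rad(I)$, and conversely any $h \in \Rad(I)$ satisfies $h^n \in I$ for some $n \geq 1$, so $h(x)^n = 0$ at each $x \in Z(I)$, whence $h(x) = 0$ there. In particular $Z(\Rad(f)) = Z((f)) = Z(f)$ and similarly $Z(\Rad(g)) = Z(g)$. This instantly yields the direction $\Rad(f) = \Rad(g) \Rightarrow Z(f) = Z(g)$.

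For the converse, assume $Z(f) = Z(g)$. Applying Theorem~\ref{th-3-6} to the principal ideals $(f)$ and $(g)$ gives
\begin{equation*}
\Rad(f) = J_{\NC^k}(Z((f))) = J_{\NC^k}(Z(f)) = J_{\NC^k}(Z(g)) = J_{\NC^k}(Z((g))) = \Rad(g),
\end{equation*}
which is exactly the required identity.

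There is no genuine obstacle here; the content of the corollary is simply a specialization of the Nullstellensatz to principal ideals, together with the observation that $Z$ is insensitive to passing to the radical.
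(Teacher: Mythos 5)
Your proof is correct and is exactly the intended argument: the paper gives no explicit proof, declaring the corollary a direct consequence of Theorem~\ref{th-3-6}, and your deduction (Nullstellensatz applied to the principal ideals, plus the elementary observation that $Z(I)=Z(\Rad(I))$) is precisely that direct consequence.
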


The radical of an arbitrary ideal can be described as follows.

\begin{corollary}\label{cor-3-9}
Let $X$ be a Nash manifold and let $I$ be an ideal of the ring
$\NC^k(X)$, where $k$ is a nonnegative integer. Then
\begin{equation*}
\Rad(I)=\Rad(f)
\end{equation*}
for some $f \in I$. Furthermore, the equality of radicals holds if and
only if $Z(I) = Z(f)$.
\end{corollary}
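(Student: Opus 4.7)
The plan is to combine the last assertion of Theorem~\ref{th-3-5} with the Nullstellensatz of Theorem~\ref{th-3-6}. For the first assertion, I view $I$ itself as a collection of Nash $k$-regulous functions and apply the final sentence of Theorem~\ref{th-3-5} to $F = I$. This produces elements $g_1,\ldots,g_r \in I$ such that $Z(I) = Z(f)$ with $f \coloneqq g_1^2 + \cdots + g_r^2$. Since $I$ is an ideal, each $g_i^2 = g_i \cdot g_i$ lies in $I$, so $f \in I$. Then Theorem~\ref{th-3-6} applied first to $I$ and then to the principal ideal $(f)$ gives
\begin{equation*}
\Rad(I) = J_{\NC^k}(Z(I)) = J_{\NC^k}(Z(f)) = \Rad((f)) = \Rad(f),
\end{equation*}
which settles the existence statement.

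For the ``furthermore'' clause, one direction is immediate: if $Z(I) = Z(f)$, then applying Theorem~\ref{th-3-6} to $I$ and to $(f)$ yields $\Rad(I) = J_{\NC^k}(Z(I)) = J_{\NC^k}(Z(f)) = \Rad(f)$. Conversely, if $\Rad(I) = \Rad(f)$, then since a function in $\NC^k(X)$ and any of its positive powers have the same zero locus, $Z(I) = Z(\Rad(I)) = Z(\Rad(f)) = Z(f)$; alternatively, one may invoke the bijection of Corollary~\ref{cor-3-7}\ref{cor-3-7-i} between radical ideals of $\NC^k(X)$ and closed subsets of $X$ in the Nash constructible topology.

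There is no real obstacle here; the proof is essentially a two-line assembly of results already established in the section. The only point requiring a brief check is that the sum of squares $f$ produced by Theorem~\ref{th-3-5} genuinely lies in $I$, which is immediate from $I$ being an ideal closed under addition and multiplication.
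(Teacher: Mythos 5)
Your proposal is correct and follows essentially the same route as the paper: invoke the final assertion of Theorem~\ref{th-3-5} to find $f \in I$ with $Z(I) = Z(f)$ (the paper leaves the check that the sum of squares lies in $I$ implicit), then apply the Nullstellensatz of Theorem~\ref{th-3-6} to both $I$ and $(f)$. The paper's proof is just a two-line version of yours, so there is nothing further to add.
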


\begin{proof}
By Theorem~\ref{th-3-5}, $Z(I) = Z(f)$ for some $f \in I$. The proof is
complete in view of Theorem~\ref{th-3-6}.
\end{proof}

As usual, for $f$ in $\NC^k(X)$, we let $\NC^k(X)_f$ denote the
localization of the ring $\NC^k(X)$ with respect to the multiplicatively
closed subset $\{1\} \cup \{f^m \colon m=1,2,\ldots\}$. In particular,
$\NC^k(X)_f$ is the zero ring if $f$ is identically equal to $0$. By
convention, $\NC^k_X(\varnothing)$ is also the zero ring.
Proposition~\ref{prop-3-4} allows us to give a geometric description of
$\NC^k(X)_f$, which will be useful in Section~\ref{sec-4}.

\begin{proposition}\label{prop-3-10}
Let $X$ be a Nash manifold, $k$ a nonnegative integer, and $f$ a Nash
$k$-regulous function on $X$. Then the restriction homomorphism
$\NC^k(X) \to \NC_X^k(X \setminus Z(f))$ induces an isomorphism between
the localization $\NC^k(X)_f$ and $\NC_X^k(X \setminus Z(f))$.
\end{proposition}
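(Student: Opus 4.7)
The plan is to check that the restriction homomorphism $r \colon \NC^k(X) \to \NC_X^k(X \setminus Z(f))$ sends $f$ to a unit, so it factors uniquely through a homomorphism $\tilde r \colon \NC^k(X)_f \to \NC_X^k(X \setminus Z(f))$ from the localization, and then to show that $\tilde r$ is bijective. The trivial case $f \equiv 0$ is handled by the convention that both rings are zero, and after reducing to each connected component I may assume $X$ is connected and $f$ is not identically zero.

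To see that $f$ becomes a unit in the target ring, I would exhibit $1/f$ as an element of $\NC_X^k(X \setminus Z(f))$. Fix Nash functions $\varphi, \psi$ on $X$ with $Z(\psi)$ nowhere dense and $f = \varphi/\psi$ on $X \setminus Z(\psi)$. Multiplying by $\varphi$, the identity $f\varphi\psi = \varphi^2$ holds on the dense set $X \setminus Z(\psi)$, hence on all of $X$ by continuity. Since $f \not\equiv 0$ and $\varphi$ is analytic on the connected manifold $X$, $Z(\varphi)$ is nowhere dense. Dividing $f\varphi\psi = \varphi^2$ by $\varphi$ where $\varphi \neq 0$ gives $f\psi = \varphi$, and dividing once more by $f$ where $f \neq 0$ yields $1/f = \psi/\varphi$ on $(X \setminus Z(f)) \setminus Z(\varphi)$. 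Since $1/f$ is of class $\C^k$ on $X \setminus Z(f)$, Definition~\ref{def-3-3} shows $1/f \in \NC_X^k(X \setminus Z(f))$, giving the factorization $\tilde r$.

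For injectivity, suppose $g/f^m \in \ker \tilde r$. Then $g$ vanishes on $X \setminus Z(f)$, so $fg \equiv 0$ on $X$ (zero on $X \setminus Z(f)$ because $g = 0$ there, zero on $Z(f)$ because $f = 0$ there); hence $g/f^m = 0$ in $\NC^k(X)_f$. For surjectivity, let $h \in \NC_X^k(X \setminus Z(f))$. Applying Proposition~\ref{prop-3-4} with the given $f$ and with $h$ in the role of $g$ there, one obtains an integer $N$ and a function $G \in \NC^k(X)$ extending $f^N h$ by zero across $Z(f)$. Then $\tilde r(G/f^N)$ coincides with $h$ on $X \setminus Z(f)$.

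The main obstacle is the invertibility step: upgrading the Nash presentation of $f$ so that the denominator of $1/f = \psi/\varphi$ is nowhere dense in $X$. Connectedness together with the analyticity of Nash functions takes care of this. Once $1/f$ is exhibited as a genuine Nash $(X,k)$-regulous function, surjectivity is essentially Proposition~\ref{prop-3-4} and injectivity is a one-line continuity argument.
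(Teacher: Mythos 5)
Your proof is correct and follows essentially the same route as the paper: surjectivity via Proposition~\ref{prop-3-4} applied to $h$ on $X \setminus Z(f)$, and injectivity via the observation that $g=0$ on $X\setminus Z(f)$ forces $fg=0$ on all of $X$. The paper's proof simply takes for granted that $f$ becomes a unit in $\NC_X^k(X\setminus Z(f))$, so your explicit verification that $1/f=\psi/\varphi$ is Nash $(X,k)$-regulous (using connectedness to make $Z(\varphi)$ nowhere dense) is a legitimate added detail rather than a different approach.
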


\begin{proof}
By Proposition~\ref{prop-3-4}, the induced homomorphism $\NC^k(X)_f \to
\NC_X^k(X \setminus Z(f))$ is surjective.

If an element $g/f^m \in \NC^k(X)_f$ is sent to $0 \in \NC_X^k(X
\setminus Z(f))$, then $fg=0$ in $\NC^k(X)$. Hence $g/f^m = 0$, which
means that the induced homomorphism is injective.
\end{proof}

If $\dim X = 1$, then $\NC^k(X) = \NC(X)$ is a Noetherian ring. The case
$\dim X \geq 2$ is entirely different.

\begin{remark}\label{rem-3-11}
The ring $\NC^k(\R^n)$ is not Noetherian if $n \geq 2$ and $k \geq 0$.
Indeed, by Proposition~\ref{prop-3-1}, Nash regulous functions are
arc-analytic, and hence the argument in \cite[Exemple~6.11]{bib6} for
the ring of arc-analytic functions can be easily adapted to the case
under consideration (cf. also \cite[Proposition~4.16]{bib4}).

It would not be much harder to prove that the ring $\NC^k(X)$ is not
Noetherian for any Nash manifold $X$ of dimension at least~$2$. We leave
details to the interested reader.
\end{remark}

Our interest in Nash regulous functions originated from an attempt, not
conclusive yet, to strengthen a result of \cite{bib3}.

\begin{problem}\label{prob-3-12}
Consider a linear equation
\begin{equation*}
f_1 y_1 + \cdots + f_r y_r = g,
\end{equation*}
where $g$ and the $f_i$ are polynomial (or regular) functions on $\R^n$.
Assume that it admits a solution where the $y_i$ are continuous
functions on $\R^n$. Then, according to \cite[Section~2]{bib3}, it has
also a continuous semialgebraic solution. One could hope to prove that
it has a regulous solution. This is indeed the case for $n=2$
\cite[Corollary~1.7]{bib20}, but fails for any $n \geq 3$
\cite[Example~6]{bib9}.

It would be interesting to decide whether or not the equation has a Nash
regulous solution.
\end{problem}

\section{Nash \texorpdfstring{$k$}{k}-regulous sheaves}\label{sec-4}

In order to avoid awkward repetitions, we begin by fixing some notation.

\begin{notation}\label{not-4-1}
Throughout this section, $X$ stands for a Nash manifold and $k$ stands
for a nonnegative integer. We will consider $X$ endowed with the
\emph{Nash constructible topology} (Section~\ref{sec-2}). Thus, a subset
$E \subseteq X$ is closed if and only if $E = Z(f)$ for some $f \in
\NC^k(X)$ (Theorem~\ref{th-3-5}). We set $X(f) \coloneqq X \setminus
Z(f)$.
\end{notation}

For every open subset $U$ of $X$, the ring $\NC_X^k(U)$ of Nash
$(X,k)$-regulous functions is defined (Definition~\ref{def-3-3}). It
readily follows that the assignment
\begin{equation*}
\NC_X^k \colon U \mapsto \NC_X^k(U)
\end{equation*}
is a sheaf of rings on $X$, and $(X, \NC_X^k)$ is a locally ringed
space.

There is a close connection between the ringed space $(X, \NC_X^k)$ and
the affine scheme $\Spec(\NC^k(X))$. We first describe relationships
between the underlying topological spaces.

By definition, a subset $V \subseteq \Spec(\NC^k(X))$ is closed if and
only if $V = V(F)$ for some collection $F \subseteq \NC^k(X)$, where
\begin{equation*}
V(F) \coloneqq \{ \pf \in \Spec(\NC^k(X)) \colon F \subseteq \pf \}.
\end{equation*}
If $I$ is the ideal generated by $F$, then $V(F) = V(\Rad(I))$. Hence,
according to Corollary~\ref{cor-3-9}, $V(F) = V(f)$ for some $f \in
I$; here $V(f) \coloneqq V(\{f\})$. Consequently, each open subset of
$\Spec(\NC^k(X))$ is of the form
\begin{equation*}
D(f) \coloneqq \Spec(\NC^k(X)) \setminus V(f)
\end{equation*}
for some $f \in \NC^k(X)$. In particular, each open subset of
$\Spec(\NC^k(X))$ is affine.

Define a map
\begin{equation*}
\iota \colon X \to \Spec(\NC^k(X))
\end{equation*}
by $\iota(x) = \mf_x$ for all $x \in X$, where
\begin{equation*}
\mf_x \coloneqq \{f \in \NC^k(X) \colon f(x)=0 \}
\end{equation*}
($\mf_x = J_{\NC^k}(\{x\})$ with notation as in Section~\ref{sec-3}).

\begin{proposition}\label{prop-4-2}
The map $\iota$ is a topological embedding of $X$ onto the subspace
$\Max(\NC^k(X))$ of $\Spec(\NC^k(X))$ comprised of the maximal ideals of
$\NC^k(X)$. Furthermore, the following conditions hold:
\begin{nconditions}
\item\label{prop-4-2-1} $Z(f) = \iota^{-1}(V(f))$ for every $f \in
\NC^k(X)$.

\item\label{prop-4-2-2} For each closed subset $Z \subseteq X$ there
exists a unique closed subset $\tilde Z \subseteq \Spec(\NC^k(X))$ such
that $Z = \iota^{-1}(\tilde Z)$.

\item\label{prop-4-2-3} $X(f) = \iota^{-1}(D(f))$ for every $f \in
\NC^k(X)$.

\item\label{prop-4-2-4} For each open subset $U \subseteq X$ there
exists a unique open subset $\tilde U \subseteq \Spec(\NC^k(X))$ such
that $U = \iota^{-1}(\tilde U)$.
\end{nconditions}
\end{proposition}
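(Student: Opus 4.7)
The plan is to lean on Corollary~\ref{cor-3-7}, which has already matched the points of $X$ bijectively with the maximal ideals of $\NC^k(X)$. This directly yields that $\iota$ is a set-theoretic bijection from $X$ onto $\Max(\NC^k(X))$, so the remaining content of the Proposition is purely topological. Assertion~\ref{prop-4-2-1} drops out by unwinding definitions:
\begin{equation*}
x \in Z(f) \iff f(x) = 0 \iff f \in \mf_x = \iota(x) \iff \iota(x) \in V(f),
\end{equation*}
and~\ref{prop-4-2-3} is its complementary statement.

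Continuity of $\iota$ then follows, since every closed subset of $\Spec(\NC^k(X))$ is an intersection of sets $V(f)$ and each $\iota^{-1}(V(f)) = Z(f)$ is closed in the Nash constructible topology. To upgrade continuity to a topological embedding onto $\Max(\NC^k(X))$, I would show that $\iota$ is a closed map into the subspace: given $Z \subseteq X$ closed, Theorem~\ref{th-3-5} supplies $f \in \NC^k(X)$ with $Z = Z(f)$, and then a direct check using~\ref{prop-4-2-1} together with the bijection between points of $X$ and maximal ideals (Corollary~\ref{cor-3-7}) gives $\iota(Z) = V(f) \cap \iota(X)$, which is closed in $\Max(\NC^k(X))$ in the subspace topology. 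Combined with bijectivity, this is exactly the embedding assertion.

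For~\ref{prop-4-2-2}, existence is immediate: write $Z = Z(f)$ via Theorem~\ref{th-3-5} and set $\tilde Z \coloneqq V(f)$, so that $\iota^{-1}(\tilde Z) = Z$ by~\ref{prop-4-2-1}. For uniqueness I would invoke the paragraph preceding the Proposition, in which Corollary~\ref{cor-3-9} has already been used to show that every closed subset of $\Spec(\NC^k(X))$ has the form $V(f)$ for a single $f \in \NC^k(X)$. Hence, if $V(f_1)$ and $V(f_2)$ satisfy $\iota^{-1}(V(f_1)) = \iota^{-1}(V(f_2))$, then $Z(f_1) = Z(f_2)$ by~\ref{prop-4-2-1}, so $\Rad(f_1) = \Rad(f_2)$ by Corollary~\ref{cor-3-8}, and therefore $V(f_1) = V(f_2)$ since $V(I)$ depends only on $\Rad(I)$. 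Condition~\ref{prop-4-2-4} then follows from~\ref{prop-4-2-2} by taking complements. The one place that requires genuine care is this uniqueness step: it depends crucially on arbitrary closed subsets of $\Spec(\NC^k(X))$ being cut out by a single function, which is what makes the dictionary between the Nash constructible topology on $X$ and the Zariski topology on $\Spec(\NC^k(X))$ bijective rather than merely surjective.
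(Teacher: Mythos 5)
Your proposal is correct and follows essentially the same route as the paper: Corollary~\ref{cor-3-7} for the bijection onto $\Max(\NC^k(X))$, the definitional computation for~\ref{prop-4-2-1}, Theorem~\ref{th-3-5} plus the fact that every closed subset of $\Spec(\NC^k(X))$ is a single $V(f)$ together with Corollary~\ref{cor-3-8} for the uniqueness in~\ref{prop-4-2-2}, and complementation for~\ref{prop-4-2-3} and~\ref{prop-4-2-4}. You merely spell out the continuity and closed-map checks that the paper leaves as ``it is now clear.''
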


\begin{proof}
By Corollary~\ref{cor-3-7}, $\iota$ includes a bijection of $X$ onto
$\Max(\NC^k(X))$. It follows immediately that \ref{prop-4-2-1} holds.

Suppose that $Z \subseteq X$ and $V \subseteq \Spec(\NC^k(X))$ are
closed subsets with $Z = \iota^{-1}(V)$. Then $Z = Z(f)$ and $V = V(g)$
for some $f, g \in \NC^k(X)$. Hence $Z(f) = Z(g)$ and, by
Corollary~\ref{cor-3-8}, $\Rad(f) = \Rad(g)$. Consequently, $V(f) = V(g)
= V$, which proves \ref{prop-4-2-2}.

Conditions \ref{prop-4-2-3} and \ref{prop-4-2-4} follow from conditions
\ref{prop-4-2-1} and \ref{prop-4-2-2}, respectively.

It is now clear that $\iota$ induces a homeomorphism between $X$ and
$\Max(\NC^k(X))$.
\end{proof}

It is worthwhile to record the following.

\begin{corollary}\label{cor-4-3}
The topological space $\Spec(\NC^k(X))$ is Noetherian.
\end{corollary}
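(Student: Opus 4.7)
The plan is to deduce Noetherianity of $\Spec(\NC^k(X))$ from the Noetherianity of the Nash constructible topology on $X$ (Proposition~\ref{prop-2-2}), via the map $\iota$ whose properties are recorded in Proposition~\ref{prop-4-2}. Concretely, I would take an arbitrary descending chain
\begin{equation*}
V_1 \supseteq V_2 \supseteq V_3 \supseteq \cdots
\end{equation*}
of closed subsets of $\Spec(\NC^k(X))$ and show it stabilizes.

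First I would pull the chain back through $\iota$. Each $V_i$ is closed, so $\iota^{-1}(V_i)$ is closed in $X$ for the Nash constructible topology; indeed, as noted just before Proposition~\ref{prop-4-2}, every closed subset of $\Spec(\NC^k(X))$ has the form $V(f)$ for some $f \in \NC^k(X)$, and Proposition~\ref{prop-4-2}\ref{prop-4-2-1} then gives $\iota^{-1}(V(f)) = Z(f)$, which is closed in the Nash constructible topology by Theorem~\ref{th-3-5}. Thus we obtain a descending chain
\begin{equation*}
\iota^{-1}(V_1) \supseteq \iota^{-1}(V_2) \supseteq \iota^{-1}(V_3) \supseteq \cdots
\end{equation*}
of closed Nash constructible subsets of $X$, which by Proposition~\ref{prop-2-2} must be stationary; pick $m$ with $\iota^{-1}(V_m) = \iota^{-1}(V_i)$ for all $i \geq m$.

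The key step is then to transfer stationarity back upstairs. For this I invoke the uniqueness assertion in Proposition~\ref{prop-4-2}\ref{prop-4-2-2}: each closed subset of $X$ arises as $\iota^{-1}$ of a \emph{unique} closed subset of $\Spec(\NC^k(X))$. Applied to $Z \coloneqq \iota^{-1}(V_m)$, both $V_m$ and $V_i$ (for $i \geq m$) are closed subsets of $\Spec(\NC^k(X))$ whose $\iota^{-1}$ equals $Z$, so the uniqueness forces $V_m = V_i$ for all $i \geq m$, completing the proof.

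There is essentially no obstacle here beyond recognizing that Proposition~\ref{prop-4-2}\ref{prop-4-2-2} provides exactly the injectivity needed for the pull-back operation $V \mapsto \iota^{-1}(V)$ on closed sets to reflect stationarity; once this is observed, the corollary is immediate from Proposition~\ref{prop-2-2}.
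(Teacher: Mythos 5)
Your argument is correct and is exactly the paper's argument: the paper's one-line proof (``follows from Proposition~\ref{prop-4-2} since $X$ is a Noetherian topological space'') is precisely the deduction you spell out, pulling a descending chain of closed sets back along $\iota$, applying Proposition~\ref{prop-2-2}, and using the uniqueness in Proposition~\ref{prop-4-2}\ref{prop-4-2-2} to push stationarity back up. Nothing to add.
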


It should be mentioned that Proposition~\ref{prop-4-2} and
Corollary~\ref{cor-4-3} are Nash $k$-regulous versions of
\cite[Th\'eor\`eme~5.29, Corollaires~5.30, 5.31, 5.32]{bib4}.

\begin{proof}
This assertion follows from Proposition~\ref{prop-4-2} since $X$ is a
Noetherian topological space.
\end{proof}

Next we study sheaves on the spaces under consideration. For any
function $f \in \NC^k(X)$, we identify the rings $\NC^k(X)_f$ and
$\NC_X^k(X(f))$ via the canonical isomorphism described in
Proposition~\ref{prop-3-10}. Denoting by $\tilde{\NC}_X^k$ the structure
sheaf on $\Spec(\NC^k(X))$, we get
\begin{equation*}
\iota_* \NC_X^k = \tilde{\NC}_X^k \quad \textrm{and} \quad \NC_X^k = \iota^{-1}
\tilde{\NC}_X^k.
\end{equation*}

In view of Proposition~\ref{prop-4-2}, the category of sheaves of
Abelian groups (resp.\ sheaves of $\NC_X^k$-modules) on $X$ is equivalent
to the category of sheaves of Abelian groups (resp.\ sheaves of
$\tilde{\NC}_X^k$-modules) on $\Spec(\NC^k(X))$. The equivalence is
effected by the direct image functor ${\FC \mapsto \iota_*\FC}$, whose
inverse is the inverse image functor $\GC \mapsto \iota^{-1}\GC$. Via
these equivalences, quasi-coherent sheaves of $\NC_X^k$-modules on $X$
correspond to quasi-coherent sheaves of $\tilde{\NC}_X^k$-modules on
$\Spec(\NC^k(X))$.

Henceforth, by a \emph{Nash $k$-regulous sheaf} on $X$ we mean a sheaf
of $\NC_X^k$-modules. Our goal is to establish basic properties of
quasi-coherent Nash $k$-regulous sheaves.

For the sake of clarity of the exposition, it is convenient to recall
Cartan's theorem~A for affine schemes (cf. \cite[Theorem~7.16,
Corollary~7.17]{bib6}). For a commutative ring~$R$, consider the affine
scheme $Y = \Spec(R)$ with structure sheaf $\OC_Y$. Any $R$-module $M$
determines a quasi-coherent sheaf~$\tilde M$ of $\OC_Y$-modules on $Y$.
The functor $M \mapsto \tilde M$ gives an equivalence of categories
between the category of $R$-modules and the category of quasi-coherent
sheaves of $\OC_Y$-modules on $Y$. Its inverse is the global section
functor $\GC \mapsto \GC(Y)$. In particular, every quasi-coherent sheaf
of $\OC_Y$-modules on $Y$ is generated by its global sections.

Returning to our main topic, for an $\NC^k(X)$-module $M$, we define a
presheaf $\tilde M_X$ of $\NC_X^k$-modules on $X$ by
\begin{equation*}
\tilde M_X(U) \coloneqq M \otimes_{\NC^k(X)} \NC_X^k(U)
\end{equation*}
for every open subset $U \subseteq X$. Since $U = X(f)$ for some $f \in
\NC^k(X)$, we get
\begin{equation*}
\tilde M_X(U) = \tilde M_X(X(f)) = M \otimes_{\NC^k(X)} \NC^k(X)_f =
\tilde M (D(f)),
\end{equation*}
where the last equality is the canonical identification. Hence $\tilde
M_X$ is actually a sheaf and
\begin{equation*}
\tilde M_X = \iota^{-1} \tilde M.
\end{equation*}

We get immediately the following variant of Cartan's theorem~A.

\begin{theorem}\label{th-4-4}
The functor $M \mapsto \tilde M_X$ gives an equivalence of categories
between the category of $\NC^k(X)$-modules and the category of
quasi-coherent Nash $k$-regulous sheaves on $X$. Its inverse is the
global section functor $\FC \mapsto \FC(X)$. In particular, every
quasi-coherent Nash $k$-regulous sheaf on~$X$ is generated by its
global sections. \qed
\end{theorem}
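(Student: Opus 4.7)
The plan is to reduce to the classical Cartan theorem~A for the affine scheme $Y \coloneqq \Spec(\NC^k(X))$ that was recalled immediately above the statement. There, the functor $M \mapsto \tilde M$ from $\NC^k(X)$-modules to quasi-coherent sheaves of $\tilde\NC_X^k$-modules on $Y$ is an equivalence with quasi-inverse $\GC \mapsto \GC(Y)$, and every such $\GC$ is generated by its global sections. I would transport this statement to $X$ via the equivalence of categories between quasi-coherent $\tilde\NC_X^k$-modules on $Y$ and quasi-coherent Nash $k$-regulous sheaves on $X$ effected by the pair $(\iota^{-1},\iota_*)$, which was set up just before the theorem on the basis of Proposition~\ref{prop-4-2} together with the identifications $\iota_*\NC_X^k = \tilde\NC_X^k$ and $\NC_X^k = \iota^{-1}\tilde\NC_X^k$.

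First I would match the forward functors. The identity $\tilde M_X = \iota^{-1}\tilde M$, recorded in the excerpt, shows that composing $M \mapsto \tilde M$ with $\iota^{-1}$ produces precisely $M \mapsto \tilde M_X$; as both factors are equivalences, so is the composite, giving half of the statement. Next I would identify the quasi-inverse. Under the equivalence $\iota_*$, a quasi-coherent Nash $k$-regulous sheaf $\FC$ on $X$ corresponds to $\iota_*\FC$ on $Y$, and
\begin{equation*}
(\iota_*\FC)(Y) = \FC(\iota^{-1}(Y)) = \FC(X),
\end{equation*}
so the quasi-inverse of $M \mapsto \tilde M_X$ is indeed $\FC \mapsto \FC(X)$. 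A direct sanity check is also available: taking $f = 1$ in the formula defining $\tilde M_X$ yields $\tilde M_X(X) = M \otimes_{\NC^k(X)} \NC^k(X)_1 = M$.

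For the final assertion I would observe that, writing $\FC = \tilde M_X$ with $M = \FC(X)$, each stalk $(\tilde M_X)_x$ coincides with the stalk of $\tilde M$ at $\iota(x)$, which is a localization of $M$ and hence is generated by the images of the global sections of $\FC$. There is no essential obstacle here; the proof is pure bookkeeping, combining the affine Cartan~A with the preparatory sheaf equivalence. The only point that required care in advance was that $\iota$ embeds $X$ merely onto the subspace $\Max(\NC^k(X)) \subseteq Y$, rather than onto all of $Y$; but that subtlety was already absorbed in Proposition~\ref{prop-4-2} and in the verification that the pair $(\iota^{-1},\iota_*)$ restricts to an equivalence of the quasi-coherent categories.
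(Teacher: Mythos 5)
Your proposal is correct and follows essentially the same route as the paper: the theorem is stated there with an immediate \textit{\qedsymbol}-style justification, deduced directly from the affine Cartan theorem~A for $\Spec(\NC^k(X))$ together with the equivalence of sheaf categories via $\iota_*$ and $\iota^{-1}$ and the identity $\tilde M_X = \iota^{-1}\tilde M$, exactly as you argue. Your extra checks (the case $f=1$ and the stalkwise description) are harmless elaborations of the same bookkeeping.
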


According to Cartan's theorem~B for affine schemes, if $\GC$ is a
quasi-coherent sheaf of $\OC_Y$-modules on $Y = \Spec(R)$, then $H^i(Y,
\GC) = 0$ for $i \geq 1$ (cf. \cite[Th\'eor\`eme~1.3.1]{bib2a}).

The equivalence of the categories of sheaves on $X$ and on
$\Spec(\NC^k(X))$ via the functors $\iota_*$ and $\iota^{-1}$ yield the
following variant of Cartan's theorem~B.

\begin{theorem}\label{th-4-5}
If $\FC$ is a quasi-coherent Nash $k$-regulous sheaf on $X$, then
\begin{equation*}
\pushQED{\qed}
H^i(X, \FC) = 0 \quad \textrm{for all} \ i \geq 1. \qedhere
\popQED
\end{equation*}
\end{theorem}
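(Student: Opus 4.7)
The plan is to reduce the statement to Cartan's theorem~B for affine schemes by transporting the cohomology computation along the topological embedding $\iota \colon X \to \Spec(\NC^k(X))$ of Proposition~\ref{prop-4-2}. That proposition already supplies the essential geometric input: the assignment $V \mapsto \iota^{-1}(V)$ is a bijection between the lattice of open subsets of $\Spec(\NC^k(X))$ and the lattice of open subsets of $X$ in the Nash constructible topology. Consequently, as the discussion preceding the theorem records, the adjoint pair $(\iota^{-1}, \iota_*)$ is a quasi-inverse equivalence between the category of sheaves of Abelian groups on $X$ and the corresponding category on $\Spec(\NC^k(X))$, and this equivalence restricts to an equivalence between quasi-coherent $\NC_X^k$-modules and quasi-coherent $\tilde{\NC}_X^k$-modules.

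First I would pass from $\FC$ to its direct image $\iota_* \FC$, which by the above is a quasi-coherent sheaf of $\tilde{\NC}_X^k$-modules on the affine scheme $\Spec(\NC^k(X))$. Next, because $\iota_*$ is an equivalence of Abelian categories, it is exact and its quasi-inverse $\iota^{-1}$ is exact too; a functor admitting an exact left adjoint preserves injective objects, hence $\iota_*$ carries any injective resolution $\FC \to \mathcal{I}^\bullet$ in the category of sheaves of Abelian groups on $X$ to an injective resolution $\iota_* \FC \to \iota_* \mathcal{I}^\bullet$ on $\Spec(\NC^k(X))$. Combined with the tautological identity $\Gamma(X, \GC) = \Gamma(\Spec(\NC^k(X)), \iota_* \GC)$, valid for any sheaf $\GC$ on $X$ because both sides equal $\GC(X)$, this yields a canonical isomorphism
\[
H^i(X, \FC) \;\cong\; H^i(\Spec(\NC^k(X)), \iota_* \FC)
\]
for every $i \geq 0$.

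Finally, I would invoke Cartan's theorem~B for affine schemes, recalled just before the statement of Theorem~\ref{th-4-5} and cited as \cite[Th\'eor\`eme~1.3.1]{bib2a}, to deduce that the right-hand side vanishes for all $i \geq 1$, and the proof is complete.

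The only genuinely non-formal step is the assertion that the equivalence $\iota_*$ preserves injective sheaves; this is the single ingredient that bridges the equivalence of categories with the equality of derived functors, and so it is where I would be most careful in a detailed write-up. Everything else is a bookkeeping exercise built on Proposition~\ref{prop-4-2} and the standard cohomological machinery for affine schemes.
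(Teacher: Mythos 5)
Your proposal is correct and follows essentially the same route as the paper, which derives the theorem directly from the equivalence of sheaf categories induced by $\iota_*$ and $\iota^{-1}$ (Proposition~\ref{prop-4-2}) together with Cartan's theorem~B for affine schemes. You merely make explicit the homological bookkeeping (preservation of injectives by the equivalence and the identification of global sections) that the paper leaves implicit.
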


It should be mentioned that Theorems \ref{th-4-4} and \ref{th-4-5} are
analogous to the results on quasi-coherent $k$-regulous sheaves obtained
in \cite{bib4}.

\phantomsection
\addcontentsline{toc}{section}{\refname}


\begin{thebibliography}{99}

\bibitem{bib1a} J.~Adamus and H.~Seyedinejad, A proof of Kurdyka's
conjecture on arc-analytic functions, Math.~Ann. (2017), doi:
10.1007/s00208-017-1544-0, arXiv: 1701.02712 [math.AG].

\bibitem{bib1} M.~Bilski, W.~Kucharz, A.~Valette and G.~Valette, Vector
bundles and regulous maps, Math.~Z.~275 (2013), 403--418.

\bibitem{bib2} J.~Bochnak, M.~Coste and M.-F.~Roy, Real Algebraic
Geometry, Ergeb. Math. Grenzgeb. Vol.~36, Springer, 1989.

\bibitem{bib3} C.~Fefferman and J.~Koll\'ar, Continuous solutions of
linear equations, in: From Fourier Analysis and Number Theory to Radon
Transforms and Geometry, pp.~233--282, Dev. Math. 28, Springer, New
York, 2013.

\bibitem{bib4} G.~Fichou, J.~Huisman, F.~Mangolte and J.-P.~Monnier,
Fonctions r\'egulues, J.~Reine Angew. Math. 718 (2016), 103--151.

\bibitem{bib5} G.~Fichou, J.-P.~Monnier and R.~Quarez, Continuous
functions in the plane regular after one blowing-up, Math. Z. 285
(2017), 287--323.

\bibitem{bib6} U.~G\"ortz and T.~Wedhorn, Algebraic Geometry I, Schemes
with Examples and Exercises, Advanced Lectures in Math., Vieweg +
Teubner, Wisbaden, 2010.

\bibitem{bib2a} A.~Grothendieck, \'Elements de g\'eom\'etrie
alg\'ebrique. III. \'Etude cohomologique des faisceaux coh\'erents. I.,
Publ. Math. IHES 11 (1961).

\bibitem{bib7} J.~Hubbard, On the cohomology of Nash sheaves, Topology
11 (1972), 265--270.

\bibitem{bib8} J.~Koll\'ar, W.~Kucharz and K.~Kurdyka, Curve-rational
functions, Math. Ann. (2017), doi: 10.1007/s00208-016-15-13-z.

\bibitem{bib9} J.~Koll\'ar and K.~Nowak, Continuous rational functions
on real and $p$-adic varieties, Math.~Z.~279 (2015), 85--97.

\bibitem{bib10} W.~Kucharz, Rational maps in real algebraic geometry,
Adv. Geom. 9 (2009), 517--539.

\bibitem{bib11} W.~Kucharz, Regular versus continuous rational maps,
Topology Appl. 160 (2013), 1375--1378.

\bibitem{bib12} W.~Kucharz, Continuous rational maps into the unit
$2$-sphere, Arch. Math. (Basel) 102 (2014), 257--261.

\bibitem{bib13} W.~Kucharz, Approximation by continuous rational maps
into spheres, J.~Eur. Math. Soc. 16 (2014), 1555--1569.

\bibitem{bib14} W.~Kucharz, Some conjectures on stratified-algebraic
vector bundles, J.~of Singul. 12 (2015), 92--104.

\bibitem{bib15} W.~Kucharz, Continuous rational maps into spheres, Math.
Z. 283 (2016), 1201--1215.

\bibitem{bib16} W.~Kucharz, Stratified-algebraic vector bundles of small
rank, Arch. Math. (Basel) 107 (2016), 239--249.

\bibitem{bib17} W.~Kucharz, Piecewise-regular maps, arXiv: 1705.04525
[math.AG].

\bibitem{bib18} W.~Kucharz and K.~Kurdyka, Some conjectures on
continuous rational maps into spheres, Topology Appl. 208 (2016),
17--29.

\bibitem{bib19} W.~Kucharz and K.~Kurdyka, Stratified-algebraic vector
bundles, J.~Reine Angew. Math. (2016), doi: 10.1515/crelle-2015-0105,
arXiv: 1308.4376 [math.AG].

\bibitem{bib20} W.~Kucharz and K.~Kurdyka, Linear equations on real
algebraic surfaces, Manuscripta Math. (2017), doi:
10.1007/s00229-017-0925-8.

\bibitem{bib21} W.~Kucharz and K.~Kurdyka, Comparison of
stratified-algebraic and topological K-theory, arXiv: 1511.04238
[math.AG].

\bibitem{bib22} W.~Kucharz and M.~Zieliński, Regulous vector bundles,
arXiv: 1703.05566 [math.AG].

\bibitem{bib23} K.~Kurdyka, Ensamble semi-alg\'ebriques sym\'etriques
par arcs, Math. Ann. 282 (1988), 445--462.

\bibitem{bib24} K.~Kurdyka and A.~Parusiński, Arc-symmetric sets and
arc-analytic mappings, in: Arc spaces and additive invariants in real
algebraic and analytic geometry, Panor. Synth\`eses 24, Soci\'et\'e
Math\'ematique de France, Paris (2007), 33--67.

\bibitem{bib25} J.-P.~Monnier, Semi-algebraic geometry with rational
continuous functions, arXiv: 1603.04193 [math.AG].

\bibitem{bib26} A.~Parusiński, Topology of injective endomorphisms of
real algebraic sets, Math. Ann. 328 (2004), 353--372.

\bibitem{bib27} M.~Zieliński, Homotopy properties of some real algebraic
maps, Homology Homotopy Appl. 18 (2016), 287--294.
\end{thebibliography}
\end{document}